\title[Global well-posedness for the \MG equation]{Global well-posedness for an advection-diffusion equation arising in magneto-geostrophic dynamics}
\author{Susan Friedlander}
\address{Department of Mathematics,
University of Southern California, 3620 S.~Vermont Ave.,
Los Angeles, CA 90089} \email{\tt susanfri@usc.edu}
\author{Vlad Vicol}
\address{Department of Mathematics, University of Chicago, 5734 University Ave., Chicago, IL 60637}
\email{\tt vicol@math.uchicago.edu}
\theoremstyle{plain}
\newtheorem{theorem}{Theorem}[section]
\newtheorem{lemma}[theorem]{Lemma}
\newtheorem{corollary}[theorem]{Corollary}
\theoremstyle{definition}
\newtheorem{remark}[theorem]{Remark}
\def\tilde{\widetilde}
\numberwithin{equation}{section}
\newcommand\LL[2]{L_{t}^{#1} L_{x}^{#2}}
\newcommand\LH[1]{L_{t}^{#1} \dot{H}_{x}^{1}}
\newcommand\LP[1]{L_{t,x}^{#1}}
\newcommand\LBMO[1]{L_{t}^{#1} BMO_{x}}
\newcommand\LDBMO[1]{L_{t}^{#1} BMO_{x}^{-1}}
\newcommand\intinttt[3]{\int_{#1}^{#2}\!\!\!\!\int_{#3}}
\newcommand\intint[1]{\int\!\!\!\int_{#1}}
\newcommand\BB[2]{\dot{B}_{#2,\infty}^{#1}}
\renewcommand\hat{\widehat}
\def\ZZ3{{\mathbb Z}^3}
\def\RR1{{\mathbb R}}
\def\RR2{{\mathbb R}^2}
\def\RR3{{\mathbb R}^3}
\def\RRd{{\mathbb R}^d}
\def\TT2{{\mathbb T}^2}
\def\osc{{\rm osc}}
\def\teps{\theta^\epsilon}
\def\ueps{u^\epsilon}
\def\xdomain{ {\mathbb R}^2 \times {\mathbb T}}
\def\Fdomain{ {\mathbb R}^2 \times {\mathbb Z}}
\def\MG{MG\ }
\def\ddiv{\mathop{\rm div} \nolimits}
\def\kkx{k_{1}}
\def\kky{k_{2}}
\def\kkz{k_{3}}
\def\KK{k}
\newcommand\blu[1]{#1}
\newcommand\red[1]{#1}
\newcommand\chd[1]{{#1}}
\begin{document}


\begin{abstract}
We use De Giorgi techniques to prove H\"older continuity of weak solutions to a class of drift-diffusion equations, with $L^2$ initial data and divergence free drift velocity that lies in $L_{t}^{\infty}BMO_{x}^{-1}$. We apply this result to prove global regularity for a family of active scalar equations which includes the advection-diffusion equation that has been proposed by Moffatt in the context of magnetostrophic turbulence in the Earth's fluid core.
%
\end{abstract}


\subjclass[2000]{76D03, 35Q35, 76W05}

\keywords{global regularity, weak solutions, De Giorgi, parabolic equations, magneto-geostrophic equations}

\maketitle

\section{Introduction}\label{sec:intro}
Active scalar evolution equations have been a topic of considerable study in recent years, in part because they arise in many physical models. In particular, \blu{such} equations are prevalent in fluid dynamics. In this paper we first examine a class of drift-diffusion equations for \blu{an unknown} scalar field $\theta(t,x)$, of the form
\begin{align}\label{eq:intro:1.1}
\partial_t \theta + (v \cdot \nabla)\theta = \Delta \theta,
\end{align}where $v(t,x)$ is a given divergence free vector field that lies in the function space $\LL{2}{2}\cap L_{t}^{\infty}BMO_{x}^{-1}$, \blu{$t>0$, and $x\in\RRd$}. \blu{In~Theorem~\ref{thm:Holder:abstract} we} prove that weak solutions to \eqref{eq:intro:1.1} are H\"older continuous. \blu{Note} that this result is new for such linear parabolic equations with very singular coefficients~\cite{AronsonSerrin,LadySolonnUralceva,Lieberman,Osada,Semenov,Zhang04,Zhang06}. We then use this result to prove in \red{Theorem~\ref{thm:nonlinear}} that Leray-Hopf weak solutions of the active scalar equation
\begin{align}
  &\partial_t \theta + (u\cdot \nabla) \theta = \Delta \theta \label{eq:intro:1.2}\\
  &\ddiv u = 0\\
  &u_j = \partial_i T_{ij} \theta \label{eq:intro:1.3}
\end{align} are classical solutions. \blu{In \eqref{eq:intro:1.3}, the} velocity vector $u$ is obtained from $\theta$ via $\{T_{ij}\}$, a $d\times d$ matrix of Calder\'on-Zygmund singular integral operators (that is, they are bounded $L^2 \mapsto L^2$ and $L^\infty\mapsto BMO$) such that $\partial_i \partial_j T_{ij} \equiv 0$. Note that in \eqref{eq:intro:1.3} we have used the summation convention on repeated indices, and $i,j\in\{1,\ldots,d\}$.

Our motivation for addressing the system \eqref{eq:intro:1.2}-\eqref{eq:intro:1.3} comes from a model proposed by Moffatt~\cite{Moffatt} for magnetostrophic turbulence in the Earth's fluid core. This model is derived from the full magnetohydrodynamic equations (MHD) in the context of a rapidly rotating, density stratified, electrically conducting fluid. After a series of approximations relevant to the geodynamo model, a linear relationship is established between the velocity and magnetic vector fields, and the scalar ``buoyancy'' $\theta$. The sole remaining nonlinearity in the system occurs in the evolution equation for $\theta$, which has the form
\begin{align}
  & \partial_t \theta + (u\cdot \nabla)\theta = S +\kappa \Delta \theta,\label{eq:intro:1.4}
\end{align}
where $S$ is a source term, and $\kappa$ is the coefficient of thermal diffusivity. Here the \red{three dimensional velocity $u$} is such that $\ddiv u =0$, and it is obtained from the buoyancy via
\begin{align}
   u = M[\theta], \label{eq:intro:1.4'}
\end{align}\blu{where $M$ is a nonlocal differential operator of order $1$}. We describe the precise form of the operator $M$ in Section~\ref{sec:MG}. An important feature of this operator is the spatial inhomogeneity that occurs due to the underlying mean magnetic field. We call \eqref{eq:intro:1.4}-\eqref{eq:intro:1.4'} the {\it magnetogeostrophic equation} (MG). We show that the MG system satisfies the conditions under which we prove Theorem~\ref{thm:nonlinear}, and hence obtain (\red{cf.~Theorem~\ref{thm:nonlinearMG}}) global well-posedness for \eqref{eq:intro:1.4}-\eqref{eq:intro:1.4'}.

An active scalar equation that has received much attention in the mathematical literature following its presentation by Constantin, Majda, and Tabak~\cite{ConstMajTab}, as a two-dimensional toy model for the three-dimensional fluid equations, is the so called surface quasi-geostrophic equation (SQG) (see, for example,~\cite{CaffVass,CorCor04,CorFeff,ConstWu08,ConstWu09,KisNazVolb,Silvestre10a,Wu04} and references therein). The dissipative form of this equation for which there is a physical derivation is
\begin{align}
  \partial_t \theta + (u\cdot\nabla)\theta = - (-\Delta)^{1/2} \theta, \label{eq:intro:1.5}
\end{align}where
\begin{align}
  u = \nabla^\perp \chd{(-\Delta)^{-1/2}} \theta \equiv(R_2\theta,-R_1 \theta)\label{eq:intro:1.6}
\end{align}and $R_i$ represents the $i^{th}$ Riesz transform. It was recently proved by Caffarelli and Vasseur~\cite{CaffVass} that solutions of \eqref{eq:intro:1.5}-\eqref{eq:intro:1.6} with $L^2$ initial data are smooth \chd{(see also the review article~\cite{CaffVass2})}. Well-posedness for \eqref{eq:intro:1.5}-\eqref{eq:intro:1.6} in the case of smooth periodic initial data was also obtained by Kiselev, Nazarov, and Volberg~\cite{KisNazVolb}. See also Constantin and Wu~\cite{ConstWu08,ConstWu09} for the super-critically dissipative SQG.

We note that the magnetogeostrophic equation MG and the critically dissipative SQG equation \eqref{eq:intro:1.5}-\eqref{eq:intro:1.6} are both derived from the Navier-Stokes equations in the context of a rapidly rotating fluid in a thin shell. For both systems the Coriolis force is dominant in the momentum equation. In the case of the SQG equation the relation \eqref{eq:intro:1.6} is derived via a projection of the three-dimensional problem onto the two-dimensional \red{horizontal bounding surface}. In the case of the MG equation the coupling with the magnetic induction equation closes the three-dimensional linear system that produces the operators $\{T_{ij}\}$, with $u_j = \partial_i T_{ij} \theta$.

Systems \eqref{eq:intro:1.2}-\eqref{eq:intro:1.3} and \eqref{eq:intro:1.5}-\eqref{eq:intro:1.6} have strong similarities. In particular, they have the same relative order of the spatial derivatives between the advection term and the diffusive term. Moreover, if $\theta(t,x)$ is a solution of \eqref{eq:intro:1.2}-\eqref{eq:intro:1.3}, then $\theta_\lambda(t,x) = \theta(\lambda^2 t, \lambda x)$ is \red{also} a solution, and hence $L^\infty(\RRd)$ is the critical Lebesgue space with respect to the natural scaling of the equation. We note that $L^\infty$ is also the critical Lebesgue space for the critically dissipative surface quasi-geostrophic equation \eqref{eq:intro:1.5}-\eqref{eq:intro:1.6}, and for the modified surface quasi-geostrophic equation (cf.~Constantin, Iyer, and Wu~\cite{ConstIyerWu}). The advantage of system \eqref{eq:intro:1.2}-\eqref{eq:intro:1.3} over the critical SQG equation is that the diffusive term is given via a local operator. The tradeoff is that the drift velocity in \eqref{eq:intro:1.2}-\eqref{eq:intro:1.3} is \blu{more singular, i.e.,} the derivative of a $BMO$ function \red{(see Koch and Tataru~\cite{KochTat} for the Navier-Stokes equations in $BMO^{-1}$).}

Our proof of Theorem~\ref{thm:Holder:abstract} and Theorem~\ref{thm:nonlinear} is along the lines of the proof of \red{Caffarelli and Vasseur~\cite[Theorem 3]{CaffVass}} for the critical SQG equation. The primary technique employed in \cite{CaffVass,ConstWu09,Vass07}, and in the present paper, is the De Giorgi iteration~\cite{DeGiorgi}. This consists of first showing that a weak solution is bounded by proving that the function $\max\{\theta-h,0\}$ has zero energy if $h$ is chosen large enough. Then a diminishing oscillation result implies smoothness of the solution in a subcritical space, namely $C^\alpha$, for some $\alpha \in (0,1)$. \red{The} proof of H\"older continuity for solutions of \eqref{eq:intro:1.1} with $v\in \LDBMO{\infty}$ does not follow directly \red{either} from \cite{CaffVass}, where $v\in\LBMO{\infty}$, \red{or} from \cite{ConstIyerWu}, where $v\in L_{t}^{\infty} C_{x}^{1-\alpha}$ and \chd{$\alpha\in(0,1)$}. The crucial step in the proof of Theorem~\ref{thm:Holder:abstract} is the local energy and uniform estimates. The main obstruction to applying the classical parabolic De Giorgi estimates via an $L^p$-based Caccioppoli inequality ($1<p<\infty$), is that $v(t,\cdot) \in BMO^{-1}$. In Section~\ref{sec:abstract} we give details as to how we overcome this \red{difficulty.}

Equation \eqref{eq:intro:1.1} is in the class of parabolic equations in divergence form that have been studied extensively, including in the classical papers of Nash~\cite{Nash}, Moser~\cite{Moser}, Aronson and Serrin~\cite{AronsonSerrin}. Osada~\cite{Osada} allowed for singular coefficients and proved H\"older continuity of solutions to \eqref{eq:intro:1.1} when $v\in L_{t}^{\infty} W_{x}^{-1,\infty}$ is divergence free. Hence Theorem~\ref{thm:Holder:abstract} may be also viewed as an improvement of the results of Osada, since if $f \in BMO(\RRd) \cap L^2(\RRd)$, then it does not follow that $f\in L^\infty(\RRd)$ (cf.~\cite{Stein93}). In the same spirit, Zhang~\cite{Zhang04,Zhang06} and Semenov~\cite{Semenov} give strong regularity results for parabolic equations of the type \eqref{eq:intro:1.1}, where the singular divergence free velocity satisfies a certain form boundedness condition. We note that this form boundedness condition does not cover the case $v\in\LDBMO{\infty}$, and hence Theorem~\ref{thm:Holder:abstract} does not follow from the results in \cite{Semenov,Zhang04,Zhang06}, and vice-versa. The overall conclusion of the body of work on parabolic equations with a singular drift velocity is that the divergence free structure of $v$ produces a dramatic gain in regularity of the solution, compared to the classical theory (cf.~\cite{LadySolonnUralceva}).

{\bf Organization of the paper.} In Section~\ref{sec:abstract} we prove H\"older regularity for the linear drift-diffusion equation \eqref{eq:intro:1.1}, with $v$ being a given divergence free vector field in the function space $\LP{2} \cap L_{t}^{\infty}BMO_{x}^{-1}$. In Section~\ref{sec:nonlinear} we apply this result to prove that a Leray-Hopf weak solution $\theta$ of the nonlinear active scalar system \eqref{eq:intro:1.2}-\eqref{eq:intro:1.3} is H\"older smooth for positive time. Since H\"older regularity is subcritical for the natural scaling of \eqref{eq:intro:1.2}-\eqref{eq:intro:1.3} we can bootstrap to prove higher regularity and hence conclude that the solution is a classical solution. In Section~\ref{sec:MG} we describe an active scalar equation that arises as a model for magneto-geostrophic dynamics in the Earth's fluid core. We show that this three dimensional {\it MG equation} is an example of the general system \eqref{eq:intro:1.2}-\eqref{eq:intro:1.3}. In the Appendix we prove the existence of weak solutions to \eqref{eq:intro:1.2}-\eqref{eq:intro:1.3} evolving from $L^2({\mathbb R}^d)$ initial data.

\section{Regularity for a parabolic equation with singular drift} \label{sec:abstract}

Consider the evolution of an unknown scalar $\theta(t,x)$ given by
\begin{align}
\partial_t \theta+ (v \cdot \nabla) \theta =  \Delta \theta  \label{eq:theta:1}
\end{align}where the velocity vector $v(t,x) = (v_1(t,x),\ldots,v_d(t,x)) \in \chd{L^2((0,\infty)\times {\mathbb R}^d)}$ is given, and $(t,x) \in  [0,\infty)\times \RRd $. Additionally let $v$ satisfy
\begin{align}
&\partial_j v_j (t,x)= 0 \label{eq:u:divfree}
\end{align}in the sense of distributions. We express $v_j$ as
\begin{align}
&v_j(t,x) = \partial_i V_{ij} (t,x) \label{eq:u:Vdef}
\end{align}in $ [0,\infty) \times\RRd $, where we have used the summation convention on repeated indices, and we denoted $V_{ij} = - (-\Delta)^{-1} \partial_i v_j$. The matrix $\{V_{ij}\}_{i,j=1}^{d}$ is given, and satisfies
\begin{align}
  V_{ij} \in L^\infty((0,\infty);L^2(\RRd)) \cap L^2((0,\infty); \dot{H}^1(\RRd)) \label{eq:V:integrability}
\end{align}for all $i,j\in\{1,\ldots,d\}$.

\begin{theorem}[\bf The linear problem]\label{thm:Holder:abstract}
Given $\theta_0 \in L^2(\RRd)$ and $\{V_{ij}\}$ satisfying \eqref{eq:V:integrability}, let $\theta \in L^\infty([0,\infty);L^2(\RRd)) \cap L^2( (0,\infty);\dot{H}^1(\RRd))$ be a global weak solution of the initial value problem associated to \eqref{eq:theta:1}--\eqref{eq:u:Vdef}. If additionally we have $V_{ij} \in L^\infty([t_0,\infty);BMO(\RRd))$ for all  $i,j\in\{1,\ldots,d\}$ and some $t_0 > 0$, then there exists $\alpha>0$ such that $\theta \in C^\alpha([t_0,\infty)\times \RRd)$.
\end{theorem}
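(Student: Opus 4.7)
The plan is to implement a two step De Giorgi program: first upgrade the weak solution to a bounded one on parabolic cylinders away from $t=0$, then prove that its oscillation decays geometrically on nested cylinders, which yields the $C^\alpha$ conclusion. Because the hypotheses only ensure $V_{ij}\in L^\infty_t BMO_x$ starting at $t_0$, I would carry out the whole argument on the slab $[t_0,\infty)\times\RRd$, using the energy identity that comes from the assumption $\theta\in L^\infty_t L^2_x \cap L^2_t\dot H^1_x$.

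The technical heart of the argument is a local Caccioppoli inequality adapted to drift in $BMO^{-1}$. Fix a parabolic cylinder $Q_r=B_r\times(t_1-r^2,t_1)$ with $t_1-r^2\geq t_0$, a height $h\geq 0$, and a cutoff $\eta\in C_c^\infty(Q_r)$. Testing \eqref{eq:theta:1} against $(\theta-h)_+\eta^2$ and using $\ddiv v=0$ reduces the drift contribution to $-\int (\theta-h)_+^2\, \eta\, (v\cdot\nabla\eta)\, dx\, dt$. Writing $v_j=\partial_i V_{ij}$ and integrating by parts in $x_i$ moves the derivative onto the product $(\theta-h)_+^2\eta\partial_j\eta$; because $\eta$ is compactly supported in $B_r$, I may simultaneously replace $V_{ij}$ by $V_{ij}-(V_{ij})_{B_r}$ without changing the integral. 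By the John–Nirenberg inequality this new matrix lies in $L^p(B_r)$ for every finite $p$, with norm controlled by $\|V_{ij}\|_{BMO}$. A careful application of Hölder's inequality combined with the Sobolev embedding $\dot H^1\hookrightarrow L^{2d/(d-2)}$ then lets me absorb the resulting $\nabla(\theta-h)_+$ factor into the left-hand side, yielding a Caccioppoli-type bound
\[
\sup_{t\in(t_1-r^2,t_1)}\!\!\int (\theta-h)_+^2\eta^2\, dx \;+\; \intinttt{t_1-r^2}{t_1}{B_r} |\nabla(\theta-h)_+|^2 \eta^2 \;\leq\; C\intinttt{t_1-r^2}{t_1}{B_r}\!\!(\theta-h)_+^2\bigl(|\nabla\eta|^2+\eta|\partial_t\eta|+\eta|D^2\eta|\bigr),
\]
with constant $C$ depending only on $d$ and $\sup_{t\geq t_0}\|V(t)\|_{BMO}$. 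The main obstacle in the entire proof sits precisely here: the classical $L^p$-based Caccioppoli route fails because $v$ is one derivative too singular, and it is the combination of the divergence form $v=\ddiv V$ with the $BMO$--mean cancellation and John–Nirenberg that rescues matters.

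With this local energy inequality in hand, the remainder is the standard De Giorgi machinery. For the $L^\infty$ step, I would fix $t_1>t_0$, a ball $B_r(x_0)$, and set up nested cylinders with radii $r_k=\tfrac r2(1+2^{-k})$, truncation heights $h_k=H(1-2^{-k})$, and cutoffs $\eta_k$ adapted to these. The Caccioppoli bound applied with $h=h_k$ on $Q_{r_k}$, combined with Sobolev embedding and Chebyshev applied to $\{(\theta-h_{k+1})_+>0\}$, produces a nonlinear recursion of the shape $E_{k+1}\leq C_0 b^k E_k^{1+\gamma}$ for a fixed $\gamma>0$ and $b>1$, where $E_k$ measures the truncated energy on $Q_{r_k}$. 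Choosing $H$ large depending on $\|\theta(t_0)\|_{L^2}$ and $\|V\|_{L^\infty_t BMO_x}$ makes $E_0$ smaller than the universal threshold of De Giorgi's first lemma, so $E_k\to 0$ and hence $\theta\leq H$ on $Q_{r/2}$. Applying the same argument to $-\theta$ gives a two-sided $L^\infty$ bound on $[t_0+\tau,\infty)\times\RRd$ for any $\tau>0$.

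To promote boundedness to $C^\alpha$, I would execute De Giorgi's oscillation-reduction lemma on the now bounded solution. Rescaling parabolically, it suffices to show that if $|\theta|\leq 1$ on $Q_2$ then $\osc_{Q_{1/2}}\theta\leq 2-\delta_0$ for some universal $\delta_0>0$. If one of $\{\theta>0\}$ or $\{\theta<0\}$ occupies a sufficiently small fraction of $Q_2$, the first lemma applied to the appropriate signed truncation immediately improves the bound from one side. In the balanced regime I would invoke a Caffarelli–Vasseur type isoperimetric lemma that constrains the measure of an intermediate level set $\{0<\theta<1/2\}$ using the same Caccioppoli estimate between two truncation heights; again the only delicate point is the reappearance of the $BMO$ drift term, handled exactly as above by the $V-(V)_{B}$ substitution. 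Iterating the resulting oscillation decay on the geometric sequence of cylinders $Q_{2^{-n}}$ gives $\osc_{Q_r}\theta\leq C r^\alpha$ for some $\alpha\in(0,1)$, which upgrades to joint Hölder continuity in $(t,x)$ on $[t_0,\infty)\times\RRd$ by the standard parabolic rescaling, completing the proof.
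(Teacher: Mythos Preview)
Your sketch correctly identifies that the crux is a local energy inequality with drift in $BMO^{-1}$, and the device of replacing $V_{ij}$ by $V_{ij}-(V_{ij})_{B_r}$ together with John--Nirenberg is exactly the right starting move. However, the Caccioppoli inequality you display---with right-hand side purely of the form $C\int (\theta-h)_+^2\bigl(|\nabla\eta|^2+\eta|\partial_t\eta|+\eta|D^2\eta|\bigr)$---does not follow from the argument you outline, and this is precisely the obstruction the paper has to work around. After Young's inequality the drift contributes a term $\epsilon^{-1}\int |\tilde V|^2(\theta-h)_+^2|\nabla\eta|^2$ with $\tilde V=V-(V)_{B_r}$. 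Pairing $|\tilde V|^2\in L^{d/2}(B_r)$ (from John--Nirenberg, with norm $\sim r^2\Vert V\Vert_{BMO}^2$) against $(\theta-h)_+^2\in L^{d/(d-2)}$ (from Sobolev) returns a factor $C\epsilon^{-1}\Vert V\Vert_{BMO}^2\,r^2/(R-r)^2$ multiplying a gradient norm of $(\theta-h)_+$ over the \emph{larger} ball. This coefficient is of order one and divided by $\epsilon$; there is no free small parameter left with which to absorb it into the left side, and the gradient it multiplies is not the $\eta^2$-weighted one you control. The same circularity arises with the parabolic exponent $2(d+2)/d$.

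The paper resolves this by reversing the order of your two steps. The $L^\infty$ bound (Lemma~\ref{lemma:L^infty}) is obtained first, \emph{globally}, using only $\ddiv v=0$ and the level-set energy inequality on all of $\RRd$---no $BMO$ hypothesis and no local Caccioppoli are needed there. With $\theta\in L^\infty_{t,x}$ already available, the local energy inequality is then proved in a weakened form (Lemma~\ref{lemma:1st:energyineq}): the terms $\int|\tilde V|^k(\theta-h)_+^2$ are handled by interpolating $(\theta-h)_+$ between $L^2$ and $L^\infty$, yielding a right-hand side $C R(R-r)^{-2}\Vert(\theta-h)_+\Vert_{L^2(Q_R)}^{2-2/(d+2)}\Vert(\theta-h)_+\Vert_{L^\infty(Q_R)}^{2/(d+2)}$. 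The downstream De~Giorgi machinery (Lemmas~\ref{lemma:sup:halfball}, \ref{lemma:2nd:energyineq}, \ref{lemma:controloflevelset}) is then adapted to this nonstandard energy bound: the exponent of the superlevel-set measure in the half-cylinder supremum estimate becomes $1/(2d+4)$ rather than the classical $1/(d+2)$, and a separate second energy inequality propagates level-set smallness forward in time in place of the Caffarelli--Vasseur isoperimetric step you invoke. The fix is not cosmetic: it is the use of the a~priori $L^\infty$ bound as an \emph{input} to the local energy estimate that makes the $BMO^{-1}$ drift tractable here.
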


In analogy with the constructions in \cite{CaffVass,ConstWu09}, the proof of Theorem~\ref{thm:Holder:abstract} consists of two steps. For $t_0 >0$ fixed, we first prove that $\theta \in L^{\infty}([t_0,\infty);L^{\infty}(\RRd))$. The main challenge is to prove the H\"older regularity of the solution, which is achieved by using the method of De Giorgi iteration (cf.~\cite{DeGiorgi,Giaq,Lieberman}). Note that for divergence-free $v\in \LP{2}$, the existence of a weak solution $\theta$ to \eqref{eq:theta:1}--\eqref{eq:u:Vdef}, evolving from $\theta_0 \in L^2$, is known (for instance, see~\cite{Semenov} where the more general $v\in L_{loc}^{1}$ is treated, also~\cite{CaffVass}, and references therein). Moreover, this weak solution satisfies the classical energy inequality and the level set energy inequalities~\eqref{eq:L^2energy} below.

\begin{remark}
  The conclusion of Theorem~\ref{thm:Holder:abstract} holds if the Laplacian on the right side of \eqref{eq:theta:1} is replaced by a generic second-order strongly elliptic operator \blu{$\partial_i( a_{ij} \partial_{j})$}, with bounded measurable coefficients $\{a_{ij}\}$.
\end{remark}

\begin{remark}
\chd{We note that the De Giorgi techniques used here
to prove H\"older regularity for solutions to \eqref{eq:theta:1}--\eqref{eq:u:Vdef} can also
be used to prove H\"older regularity for the problem with a
forcing term $S$ on the right side of \eqref{eq:theta:1}. In this case we consider
$S \in L_{t,x}^r$ to be an externally given force, with $r>1+d/2$ (cf.~\cite{Lieberman}).}
\end{remark}

\begin{remark}
  \chd{In a very recent preprint, Seregin, Silvestre, \v{S}ver\'ak, and Zlato\v{s}~\cite{SSSZ} also use De Giorgi techniques to prove H\"older regularity of solutions to a parabolic equation with drift velocities in $L_t^\infty BMO_x^{-1}$.}
\end{remark}

{\bf Notation.} In the following we shall use the classical function spaces: $L^p$ - Lebesgue spaces, $BMO$ - functions with bounded mean oscillation, $BMO^{-1}$ - derivatives of $BMO$ functions, $\dot{H}^s$ - homogeneous Sobolev spaces, and $C^\alpha$ - H\"older spaces. To emphasize the different integrability in space and time we shall denote $L^p([0,\infty);L^q(\RRd))$ by $\LL{p}{q}$ for $1\leq p,q\leq \infty$, and similarly for $\LH{p}$ and $\LBMO{p}$. Also $\LP{p}(I \times B) = L^p(I;L^p(B))$ for any $I\subset {\mathbb R}$ and $B \subset {\mathbb R}^d$. The ball in ${\mathbb R}^d$ and the parabolic cylinder in ${\mathbb R}^{d+2}$ are classically denoted by $B_\rho(x_0) = \{ x\in \RRd \colon |x-x_0|<\rho\}$ and $Q_\rho (t_0,x_0) = [t_0 - \rho^2,t_0] \times B_\rho(x_0)$ for $\rho>0$. Lastly, we shall write $(f - k)_+ = \max\{f-k,0\}$.
\subsection{Boundedness of the solution}
The first step is to show that a weak solution is bounded for \blu{positive time}.
\begin{lemma}[\bf From $L^2$ to $L^\infty$]\label{lemma:L^infty}
Let $\theta \in L^\infty([0,\infty);L^2(\RRd)) \cap L^2( (0,\infty);\dot{H}^1(\RRd))$ be a global weak solution of \eqref{eq:theta:1}-\eqref{eq:u:Vdef} evolving from $\theta_0\in L^{2}(\RRd)$, where $v \in L^2((0,\infty);L^2(\RRd))$. Then for all $t>0$ we have
\begin{align}
  \Vert \theta(t,\cdot) \Vert_{L^\infty(\RRd)} \leq \frac{C \Vert \theta_0 \Vert_{L^2(\RRd)}}{t^{d/4}},
\end{align}for some sufficiently large positive dimensional constant $C$.
\end{lemma}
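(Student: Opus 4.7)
The argument is a De Giorgi--type $L^2 \to L^\infty$ iteration. The key structural point is that, because $v$ is divergence--free, the drift term disappears from the level--set energy identity: for any $k \geq 0$,
\begin{equation*}
\int_{\RRd} (v \cdot \nabla \theta)\, (\theta - k)_+ \, dx = \frac{1}{2}\int_{\RRd} v \cdot \nabla (\theta - k)_+^2 \, dx = -\frac{1}{2}\int_{\RRd} (\ddiv v)\, (\theta - k)_+^2 \, dx = 0.
\end{equation*}
Hence, for every $k \geq 0$ and $0 \leq s \leq t$, the weak solution satisfies
\begin{equation*}
\|(\theta - k)_+(t)\|_{L^2}^2 + 2\int_s^t \|\nabla(\theta - k)_+\|_{L^2}^2 \, d\tau \leq \|(\theta - k)_+(s)\|_{L^2}^2,
\end{equation*}
which is the level--set energy inequality already alluded to in the text (for $v \in \LP{2}$ this requires only a standard truncation/approximation justification). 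This identity is what makes $v \in L^2_{t,x}$ enough at this stage; no $BMO^{-1}$ structure is needed.

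\textbf{Setting up the iteration.} Fix $T > 0$ and a threshold $M > 0$ to be determined. Define level cutoffs $k_n = M(1-2^{-n})$, time cutoffs $T_n = T(1-2^{-n})$, put $\theta_n = (\theta - k_n)_+$, and set
\begin{equation*}
U_n := \sup_{t \geq T_n}\|\theta_n(t)\|_{L^2}^2 + 2 \int_{T_n}^{\infty} \|\nabla \theta_n(\tau)\|_{L^2}^2 \, d\tau.
\end{equation*}
Applying the above energy inequality with any $s \in [T_{n-1}, T_n]$, taking the supremum in $t \geq T_n$, and averaging in $s$ yields $U_n \leq (C\, 2^n/T) \int_{T_{n-1}}^\infty \|\theta_n\|_{L^2}^2 \, ds$. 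On the set $\{\theta_n > 0\} \subset \{\theta_{n-1} > M 2^{-n}\}$, Chebyshev gives $\theta_n^2 \leq (2^n \theta_{n-1}/M)^{4/d}\, \theta_{n-1}^2$; combined with the parabolic Sobolev embedding $\LL{\infty}{2} \cap \LH{2} \hookrightarrow \LP{2 + 4/d}$ applied to $\theta_{n-1}$, this produces the super--linear recursion
\begin{equation*}
U_n \leq \frac{C_0\, 2^{n(1+4/d)}}{T\, M^{4/d}}\, U_{n-1}^{1+2/d}.
\end{equation*}

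\textbf{Closing the iteration.} This is a recursion of the form $U_n \leq K B^n U_{n-1}^{\beta}$ with $K = C_0/(T M^{4/d})$ and $\beta = 1 + 2/d > 1$, and a standard nonlinear iteration lemma guarantees $U_n \to 0$ provided $U_0 \leq c_d\, K^{-1/(\beta-1)} = c_d\,(TM^{4/d})^{d/2}$. Since the basic energy inequality gives $U_0 \leq C\|\theta_0\|_{L^2}^2$, this smallness requirement holds precisely when $M \geq C \|\theta_0\|_{L^2}/T^{d/4}$. Fixing $M$ at this value forces $(\theta - M)_+ \equiv 0$ on $[T,\infty) \times \RRd$, so $\theta(T,\cdot) \leq M$ almost everywhere; running the same argument on $-\theta$ yields the two--sided bound and hence the lemma.

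\textbf{Main obstacle.} The substantive point is really the cancellation of the drift in the level--set inequality — this is what makes the mere $L^2_{t,x}$ integrability of $v$ sufficient at this stage, in sharp contrast with the $BMO^{-1}$ machinery needed for the H\"older step in Theorem~\ref{thm:Holder:abstract}. Once this cancellation is absorbed into the energy inequality, everything else is careful bookkeeping: averaging over the time slab $[T_{n-1},T_n]$, using Chebyshev at the favorable exponent $2+4/d$, and balancing the geometric factors $2^{n(1+4/d)}$ against the polynomial threshold $M^{4/d}$ to close the De Giorgi recursion.
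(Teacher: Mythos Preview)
Your proposal is correct and follows essentially the same De Giorgi iteration as the paper: the same level--set energy inequality (relying only on $\ddiv v=0$), the same dyadic cutoffs $k_n=M(1-2^{-n})$ and $T_n=T(1-2^{-n})$, the same recursion $U_n \le C\,2^{n(1+4/d)}(TM^{4/d})^{-1}U_{n-1}^{1+2/d}$ via Gagliardo--Nirenberg--Sobolev and Chebyshev, and the same choice $M\sim \|\theta_0\|_{L^2}/T^{d/4}$ to force $U_n\to 0$. Your write-up in fact spells out a couple of steps (the averaging over $s\in[T_{n-1},T_n]$ and the Chebyshev gain at exponent $2+4/d$) that the paper leaves implicit with a reference to \cite{CaffVass,ConstWu09}.
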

\begin{proof}
  The proof of this lemma is mutatis-mutandis as in~\cite{CaffVass,ConstWu09}, and requires only the fact that $v$ is divergence free. The main idea is that since $y\mapsto(y-h)_+$ is convex, for all $h>0$ we have
\begin{align*}
  \partial_t (\theta - h)_+ - \Delta(\theta-h)_+ +(v\cdot\nabla)(\theta-h)_+ \leq 0,
\end{align*}and hence, multiplying by $(\theta-h)_+$ integrating by parts, and using that $\ddiv v=0$, we obtain the energy inequality
\begin{align}\label{eq:L^2energy}
  \int_{\RRd} | (\theta(t_2,\cdot) - h)_+|^2 dx + 2 \intinttt{t_1}{t_2}{\RRd} |\nabla (\theta - h)_+|^2 dx dt \leq \int_{\RRd} | (\theta(t_1,\cdot) - h)_+|^2 dx,
\end{align}for all $h>0$ and $0<t_1<t_2<\infty$. For $t_0 >0$, and $H>0$ to be chosen sufficiently large, we define $t_n = t_0 - t_0/2^n$, $h_n = H - H/2^n$, and
\begin{align*}
  c_n = \sup_{t\geq t_n} \int_{\RRd} |(\theta(t,\cdot) - h_n)_+|^2 dx + 2 \intinttt{t_n}{\infty}{\RRd} |\nabla (\theta-h_n)|^2 dx dt,
\end{align*}where $n\geq 0$.
The inequality \eqref{eq:L^2energy}, the Gagliardo-Nirenberg-Sobolev inequality, and Riesz interpolation then imply that
\begin{align*}
  c_{n+1} \leq \chd{\frac{C}{t_0 H^{4/d}}} 2^{n(1+4/d)} c_n^{1+2/d}.
\end{align*}Letting $H = C c_0^{1/2}/t_0^{d/4} \leq C \Vert \theta_0 \Vert_{L^2(\RRd)}/t_0^{d/4}$, for some sufficiently large dimensional constant $C$, implies that $c_n \rightarrow 0$ exponentially as $n\rightarrow \infty$, and therefore \blu{$ \theta(t_0,\cdot) \leq H$. Applying the same procedure to $-\theta$} concludes the proof of the lemma. We refer the reader to~\cite{CaffVass,ConstWu09} for further details.
\end{proof}

\subsection{Local energy and uniform inequalities}
In proving the boundedness of the solution we only required that $v \in \LP{2}$, and $\ddiv v=0$. For the rest of the section we use the additional assumption $v\in \LDBMO{\infty}$.

\begin{lemma}[\bf First energy inequality]\label{lemma:1st:energyineq}
Let $\theta\in\LL{\infty}{2}\cap\LH{2}$ be a global weak solution of the initial value problem associated to \eqref{eq:theta:1}--\eqref{eq:u:Vdef}. Furthermore, assume that $V_{ij} \in L^{\infty}((0,\infty);BMO(\RRd))$ for all $i,j\in\{1,\ldots,d\}$, and \eqref{eq:V:integrability} holds. Then for any $0<r<R$ and \blu{$h\in {\mathbb R}$}, we have
\begin{align}\label{eq:1st:energyineq}
&\Vert (\theta-h)_+ \Vert_{\LL{\infty}{2}(Q_r)}^2 + \Vert \nabla (\theta-h)_+ \Vert_{\LP{2}(Q_r)}^2\notag\\
& \qquad \qquad \leq \frac{C\, R}{(R-r)^2} \Vert (\theta-h)_+ \Vert_{\LP{2}(Q_R)}^{2-\frac{2}{d+2}} \Vert (\theta -h)_+ \Vert_{\LP{\infty}(Q_R)}^{\frac{2}{d+2}},
\end{align}where $C = C(d,\Vert V_{ij} \Vert_{\LBMO{\infty}})$ is a fixed positive constant, and we have denoted $Q_\rho = [t_0 - \rho^2,t_0]\times B_\rho(x_0)$ for $\rho>0$ and an arbitrary $(t_0,x_0) \in (0,\infty)\times \RRd$. \blu{Moreover, estimate \eqref{eq:1st:energyineq} also holds with $\theta$ replaced by $-\theta$.}
\end{lemma}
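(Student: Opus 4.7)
The strategy is a Caccioppoli-type estimate obtained by testing the equation for $\bar\theta := (\theta-h)_+$ against $\bar\theta\phi^2$, where $\phi(t,x)$ is a smooth parabolic cutoff with $\phi\equiv 1$ on $Q_r$, supported in $Q_R$, and satisfying $|\partial_t\phi| + |\nabla\phi|^2 + \phi|D^2\phi| \leq C(R-r)^{-2}$. Since $y\mapsto(y-h)_+$ is convex, $\bar\theta$ satisfies $\partial_t\bar\theta + v\cdot\nabla\bar\theta - \Delta\bar\theta \leq 0$ in the distributional sense. Testing against $\bar\theta\phi^2$ truncated in time to $[t_0-R^2,\tau]$ for $\tau\in[t_0-r^2,t_0]$, integrating by parts in the diffusion term, and using $\nabla\theta=\nabla\bar\theta$ on $\{\bar\theta>0\}$, produces
\begin{align*}
\frac{1}{2}\int \bar\theta^2\phi^2(\tau,\cdot)\,dx + \int\!\!\!\int_{Q_R}|\nabla\bar\theta|^2\phi^2 \leq \int\!\!\!\int_{Q_R}\bar\theta^2\phi|\partial_t\phi| + 2\int\!\!\!\int_{Q_R}|\bar\theta\phi\,\nabla\bar\theta\cdot\nabla\phi| + |\mathcal D|,
\end{align*}
with $\mathcal D := \int\!\!\!\int_{Q_R}(v\cdot\nabla\bar\theta)\bar\theta\phi^2$.

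The most delicate contribution is $\mathcal D$. Using $\bar\theta\partial_j\bar\theta = \frac{1}{2}\partial_j\bar\theta^2$ and $\ddiv v = 0$ gives $\mathcal D = -\int\!\!\!\int_{Q_R}\bar\theta^2 v_j\phi\partial_j\phi$; substituting $v_j = \partial_i V_{ij}$ and integrating by parts in $x_i$ transfers the derivative onto $\bar\theta^2\phi\partial_j\phi$. Since $\partial_i$ annihilates any spatial constant, $V_{ij}$ may be replaced throughout by $\widetilde V_{ij}(t,\cdot) := V_{ij}(t,\cdot) - c_{ij}(t)$, where $c_{ij}(t)$ denotes the average of $V_{ij}(t,\cdot)$ over $B_R(x_0)$. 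This yields
$$\mathcal D = 2\int\!\!\!\int_{Q_R}\widetilde V_{ij}\,\bar\theta\partial_i\bar\theta\,\phi\partial_j\phi + \int\!\!\!\int_{Q_R}\widetilde V_{ij}\,\bar\theta^2\bigl(\partial_i\phi\,\partial_j\phi + \phi\,\partial_{ij}\phi\bigr).$$

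The resulting pieces are controlled via H\"older's inequality and the John--Nirenberg bound $\|\widetilde V_{ij}(t,\cdot)\|_{L^p(B_R)} \leq C_p R^{d/p}\|V_{ij}\|_{L_t^\infty BMO_x}$, valid uniformly in $t$ for every $p<\infty$. Young's inequality absorbs the two terms carrying $|\nabla\bar\theta|\phi$ into $\frac{1}{2}\int\!\!\!\int|\nabla\bar\theta|^2\phi^2$ on the left, and the remainder has the schematic form $C(R-r)^{-2}\int\!\!\!\int_{Q_R}(1+|\widetilde V_{ij}|^2)\bar\theta^2$. Choosing $p=2(d+2)$ for the $|\widetilde V_{ij}|^2$ piece and $p=d+2$ for the $|\widetilde V_{ij}|$ piece gives, via $|Q_R|\sim R^{d+2}$, both $\|\widetilde V_{ij}\|_{L^{2(d+2)}(Q_R)}^2 \leq CR$ and $\|\widetilde V_{ij}\|_{L^{d+2}(Q_R)} \leq CR$. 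The dual H\"older factor is $\|\bar\theta\|_{L^q(Q_R)}^2$ with $q = 2(d+2)/(d+1)$, which by the elementary interpolation $\|\bar\theta\|_{L^q} \leq \|\bar\theta\|_{L^2}^{2/q}\|\bar\theta\|_{L^\infty}^{1-2/q}$ equals exactly $\|\bar\theta\|_{L^2(Q_R)}^{2-\frac{2}{d+2}}\|\bar\theta\|_{L^\infty(Q_R)}^{\frac{2}{d+2}}$. The non-$V$ remainder $C(R-r)^{-2}\|\bar\theta\|_{L^2(Q_R)}^2$ is absorbed into the same upper bound via the dimensional inequality $\|\bar\theta\|_{L^2(Q_R)}^{2/(d+2)} \leq CR\|\bar\theta\|_{L^\infty(Q_R)}^{2/(d+2)}$.

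Taking the supremum over $\tau\in[t_0-r^2,t_0]$ on the left delivers the $\LL{\infty}{2}(Q_r)$ contribution in \eqref{eq:1st:energyineq}; the symmetric bound with $\theta$ replaced by $-\theta$ is obtained by running the identical argument on $-\theta$, which also satisfies the subsolution inequality. The main obstacle is the gradient-drift term $\int\!\!\!\int \widetilde V_{ij}\bar\theta\partial_i\bar\theta\,\phi\partial_j\phi$: the BMO hypothesis places $\widetilde V_{ij}$ only in local $L^p$ for $p<\infty$, so the exponents must be tuned so that the $R^{1/2}$ gain from John--Nirenberg combines with the Lebesgue interpolation on $\bar\theta$ to produce precisely the powers $2-2/(d+2)$ and $2/(d+2)$ without destroying the prefactor $R/(R-r)^2$.
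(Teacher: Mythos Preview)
Your argument is correct and follows essentially the same route as the paper: test against $(\theta-h)_+\phi^2$, subtract the spatial mean of $V_{ij}$ over $B_R$ before integrating by parts, invoke John--Nirenberg to put $\widetilde V_{ij}(t,\cdot)$ into $L^p(B_R)$ with norm $\sim R^{d/p}$, and then interpolate $\bar\theta$ between $L^2$ and $L^\infty$ to land on the exponents $2-\tfrac{2}{d+2}$ and $\tfrac{2}{d+2}$. The only cosmetic difference is that the paper carries a free parameter $\varepsilon\in(0,2)$ through the H\"older/interpolation step and sets $\varepsilon=2/(d+2)$ at the end, whereas you fix the exponents $p=d+2$ and $p=2(d+2)$ from the outset; the resulting bounds are identical.
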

\begin{remark}
  Note that from Lemma~\ref{lemma:L^infty} we have that $\theta \in \LP{\infty}$, and hence the right side of \eqref{eq:1st:energyineq} is finite.
\end{remark}
\begin{remark}
The classical local energy inequality (cf.~\cite{Giaq,Lieberman,Osada}, see also~\cite{CaffVass,ConstWu09}) does not contain the term $\Vert (\theta -h)_+ \Vert_{\LP{\infty}(Q_R)}$ on the right, since the velocity field $v$ is not as singular as in our case. In this section we prove that since in \eqref{eq:1st:energyineq} the exponent $2/(d+2)$ of $\Vert (\theta -h)_+ \Vert_{\LP{\infty}(Q_R)}$ is ``small enough'', the De Giorgi program may still be carried out to obtain the H\"older regularity of weak solutions.
\end{remark}
\begin{proof}[Proof of Lemma~\ref{lemma:1st:energyineq}]
Fix $h\in {\mathbb R}$ and let $0<r<R$ be such that $t_0/2 - R^2 > 0$. Let $\eta(t,x)\in C_{0}^{\infty}((0,\infty)\times \RRd)$ be a smooth cutoff function such that
\begin{align*}
  & 0 \leq \eta \leq 1\ \mbox{in}\ (0,\infty)\times \RRd,\\
  & \eta \equiv 1\ \mbox{in}\ Q_r(x_0,t_0), \ \mbox{and}\ \eta \equiv 0\ \mbox{in}\ {\mathop cl} \{Q_R^c(x_0,t_0) \cap \{ (t,x): t\leq t_0\}\},\\
  & |\nabla \eta| \leq \frac{C}{R-r}, |\nabla \nabla \eta| \leq \frac{C}{(R-r)^2}, |\partial_t \eta| \leq \frac{C}{(R-r)^2}\ \mbox{in}\ Q_R(x_0,t_0) \setminus Q_r(x_0,t_0),
\end{align*}for some positive dimensional constant $C$. Define $t_1 = t_0 - R^2 > 0$ and let $t_2 \in [t_0-r^2,t_0]$ be arbitrary. Multiply \eqref{eq:theta:1} by $(\theta-h)_+ \eta^2$ and then integrate on $[t_1,t_2]\times \RRd$ to obtain
\begin{align}\label{eq:local:1}
  \intinttt{t_1}{t_2}{\RRd} \partial_t \left( (\theta - h)_+^2\right) \eta^2\; dx dt &-2 \intinttt{t_1}{t_2}{\RRd} \partial_{jj} (\theta-h)_+ (\theta-h)_+ \eta^2\; dx dt\notag\\
   &\qquad + \intinttt{t_1}{t_2}{\RRd} \partial_i V_{ij}\; \partial_j \left( (\theta-h)_+^2\right) \eta^2 \; dx dt = 0.
\end{align}The main obstruction to applying classical the de Giorgi estimates (via the $L^p$-based Caccioppoli inequality, cf.~\cite{Giaq,Lieberman}) is that $\partial_i V_{ij}\in L_t^\infty BMO_x^{-1}$, as opposed to the case $L_t^\infty W_x^{-1,\infty}$ considered by Osada~\cite{Osada} (see also~\cite{Semenov}). We overcome this difficulty by subtracting from $V_{ij}(t,\cdot)$ its spatial mean over $\{t\}\times B_R$, namely $\overline{V}_{ij,B_R}(t)$ (this does not introduce any lower order terms because $\partial_{x_i} \overline{V}_{ij,B_R}(t)=0$), and by appealing to the John-Nirenberg inequality. More precisely, we define
\begin{align}\label{eq:V:mean:def}
\tilde{V}_{ij,R}(t,x) = V_{ij}(t,x) - \overline{V}_{ij,B_R}(t) = V_{ij}(t,x) - \frac{1}{|B_R|} \int_{B_R} V_{ij}(t,y)\; dy,
\end{align}and note that $\partial_i V_{ij} = \partial_i \tilde{V}_{ij,R}$. Therefore, the third term on the left of \eqref{eq:local:1} may be replaced by
\begin{align*}
  \intinttt{t_1}{t_2}{\RRd} \partial_i \tilde{V}_{ij,R}\; \partial_j \left( (\theta-h)_+^2\right) \eta^2 \; dx dt.
\end{align*}We integrate by parts in $t$ the first term on the left of \eqref{eq:local:1}, and use $\eta(t_1,\cdot)\equiv 0$. The second term we integrate twice by parts in $x_j$, and the third term on the left of \eqref{eq:local:1} we integrate by parts first in $x_j$ (and use $\partial_j (\partial_i \tilde{V}_{ij,R}) = \partial_i (\partial_j V_{ij}) = \partial_j v_j = 0$) and then integrate by parts in $x_i$, to obtain
\begin{align}\label{eq:local:2}
& \frac{1}{2}\int_{\RRd} (\theta(t_2,\cdot)-h)_+^2 \eta(t_2,\cdot)^2 \; dx  +  \intinttt{t_1}{t_2}{\RRd} | \nabla (\theta-h)_+|^2 \eta^2 \; dx dt\notag\\
& \qquad \qquad =  \intinttt{t_1}{t_2}{\RRd} (\theta-h)_+^2 \eta \partial_t \eta\; dx dt + \intinttt{t_1}{t_2}{\RRd} (\theta- h)_+^2 \partial_j (\eta \partial_j \eta)\; dx dt\notag\\
& \qquad \qquad \qquad - \intinttt{t_1}{t_2}{\RRd} \tilde{V}_{ij,R} (\theta-h)_+^2 \partial_i (\eta \partial_j \eta) \; dx dt\notag\\
&\qquad \qquad \qquad - 2 \intinttt{t_1}{t_2}{\RRd} \tilde{V}_{ij,R} \partial_i (\theta-h)_+ (\theta-h)_+ \eta \partial_j \eta \; dx dt .
\end{align}
Using the bounds on the time and space derivatives of $\eta$, the fact that $\eta \equiv 1$ on $Q_r$, $t_2\leq t_0$, the H\"older and $\varepsilon$-Young inequalities, \red{we obtain from \eqref{eq:local:2}}
\begin{align}\label{eq:local:3}
  & \int_{B_r} (\theta(t_2,\cdot)-h)_+^2 \; dx + 2 \intinttt{t_1}{t_2}{\RRd} |\nabla (\theta-h)_+|^2 \eta^2 \; dx dt \notag\\
  & \qquad \leq \frac{C}{(R-r)^2} \intint{Q_R} (\theta-h)_+^2 \; dx dt + \frac{C}{(R-r)^2} \intint{Q_R} | \tilde{V}_{ij,R} |\; (\theta-h)_+^2\; dxdt \notag\\
  & \qquad + \intinttt{t_1}{t_2}{\RRd} |\nabla (\theta-h)_+|^2 \eta^2 \; dx dt  + \frac{C}{(R-r)^2} \intint{Q_R} | \tilde{V}_{ij,R} |^2 (\theta-h)_+^2\; dxdt.
\end{align}
After absorbing the third term on the right of \eqref{eq:local:3} into the left side, we take the supremum over $t_2 \in [t_0 - r^2,t_0]$, to obtain
\begin{align}\label{eq:local:4}
& \Vert (\theta-h)_+ \Vert_{\LL{\infty}{2}(Q_r)}^2 + \Vert \nabla (\theta-h)_+ \Vert_{\LP{2}(Q_r)}^2\notag\\
& \qquad \leq  \frac{C}{(R-r)^2} \Vert (\theta-h)_+\Vert_{\LP{2}(Q_R)}^2 + \frac{C}{(R-r)^2} \intint{Q_R} | \tilde{V}_{ij,R} |\; (\theta-h)_+^2\; dxdt\notag\\
& \qquad \qquad + \frac{C}{(R-r)^2} \intint{Q_R} | \tilde{V}_{ij,R} |^2 (\theta-h)_+^2\; dxdt.
\end{align} As a corollary of the celebrated John-Nirenberg inequality (cf.~\cite{Giaq,Stein93}) we have that for any fixed $R>0$, $t\in[t_0 - R^2,t_0]$, and $1< p < \infty$,
\begin{align*}
  \Vert \tilde{V}_{ij,R}(t,\cdot) \Vert_{L^p(B_R)} &= \Vert V_{ij}(t,\cdot) - \overline{V}_{ij,B_R}(t) \Vert_{L^p(B_R)}\notag\\
  & \leq C \Vert V_{ij}(t,\cdot) \Vert_{BMO(\RRd)} |B_R|^{1/p},
\end{align*}where $C = C(d,p)>0$ is a fixed constant (recall that $C(d,p)\rightarrow \infty$ as $p\rightarrow \infty$). The fact that $V_{ij} \in L^\infty([t_0/2,\infty); BMO(\RRd))$ implies that for all $t\in [t_0-R^2,t_0]$ we have
\begin{align}\label{eq:V:BMO:bound}
  \Vert \tilde{V}_{ij,R}(t,\cdot) \Vert_{L^p(B_R)} &\leq C_0 |B_R|^{1/p}
\end{align}for a positive constant \chd{$C_0 = C_0(\Vert V_{ij} \Vert_{L^\infty([t_0/2,\infty);BMO(\RRd))},d,p)$}. We fix $0<\varepsilon<2$ to be chosen later, and using \eqref{eq:V:BMO:bound} and the H\"older inequality we bound
\begin{align*}
  \intint{Q_R} | \tilde{V}_{ij,R} |\; (\theta-h)_+^2\; dxdt &= \int_{t_0-R^2}^{t_0} \left(\int_{B_R}| \tilde{V}_{ij,R}(t,x) |\; (\theta-h)_+^2(t,x)  \; dx \right)dt\notag\\
  &\leq C_0 |B_R|^{\varepsilon/2} \int_{t_0-R^2}^{t_0} \Vert (\theta(t,\cdot)-h)_+ \Vert_{L^{4/(2-\varepsilon)}(B_R)}^2\; dt.
\end{align*}Using the interpolation inequality $\Vert f \Vert_{L^p}\leq C \Vert f \Vert_{L^2}^{2/p} \Vert f \Vert_{L^\infty}^{1-2/p}$, with \chd{$p=4/(2-\varepsilon)$}, we obtain from the above estimate that
\begin{align}
  &\intint{Q_R} | \tilde{V}_{ij,R} |\; (\theta-h)_+^2\; dxdt \notag\\
  & \qquad \leq C_0 |B_R|^{\varepsilon/2}\int_{t_0-R^2}^{t_0} \Vert (\theta(t,\cdot)-h)_+ \Vert_{L^{2}(B_R)}^{2-\varepsilon} \Vert (\theta(t,\cdot)-h)_+ \Vert_{L^{\infty}(B_R)}^\varepsilon\; dt\notag\\
  & \qquad \leq C_0 R^{\varepsilon(\chd{d}+2)/2} \Vert (\theta-h)_+ \Vert_{\LP{2}(Q_R)}^{2-\varepsilon} \Vert (\theta-h)_+ \Vert_{\LP{\infty}(Q_R)}^\varepsilon.\label{eq:1stBMObound}
\end{align}Similarly, from \eqref{eq:V:BMO:bound}, the H\"older inequality and $L^p$ interpolation, we obtain
\begin{align}
  \intint{Q_R} | \tilde{V}_{ij,R} |^2  (\theta-h)_+^2\; dxdt \leq C_0 R^{\varepsilon(\chd{d}+2)/2} \Vert (\theta-h)_+ \Vert_{\LP{2}(Q_R)}^{2-\varepsilon} \Vert (\theta-h)_+ \Vert_{\LP{\infty}(Q_R)}^\varepsilon.\label{eq:2ndBMObound}
\end{align}Combining estimates \eqref{eq:local:4} with \eqref{eq:1stBMObound}, \eqref{eq:2ndBMObound}, and the H\"older inequality, we conclude that
\begin{align}\label{eq:finalBMObound}
& \Vert (\theta-h)_+ \Vert_{\LL{\infty}{2}(Q_r)}^2 + \Vert \nabla (\theta-h)_+ \Vert_{\LP{2}(Q_r)}^2\notag\\
& \qquad \leq \frac{C_0 R^{\varepsilon(\chd{d}+2)/2}}{(R-r)^2} \Vert (\theta-h)_+ \Vert_{\LP{2}(Q_R)}^{2-\varepsilon} \Vert (\theta-h)_+ \Vert_{\LP{\infty}(Q_R)}^\varepsilon.
\end{align}The proof of the lemma is concluded by letting $\varepsilon = 2/(d+2)$ in \eqref{eq:finalBMObound} above.
\end{proof}
By applying the H\"older inequality to the right side of \eqref{eq:1st:energyineq} we then obtain:
\begin{corollary}\label{corr:energy}
  Let $\theta$ be as in Lemma~\ref{lemma:1st:energyineq}. Then we have
  \begin{align}\label{eq:energy:corrolary}
   \Vert (\theta-h)_+ \Vert_{\LL{\infty}{2}(Q_r)}^2 + \Vert \nabla (\theta-h)_+ \Vert_{\LP{2}(Q_r)}^2 \leq \frac{C R^{d+2}}{(R-r)^2} \Vert (\theta-h)_+ \Vert_{\LP{\infty}(Q_R)}^2,
  \end{align}for some positive constant $C = C(d,\Vert V_{ij} \Vert_{\LBMO{\infty}})$.
\end{corollary}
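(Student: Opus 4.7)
The plan is to derive the corollary directly from Lemma~\ref{lemma:1st:energyineq} by using a trivial H\"older-type estimate to exchange the $\LP{2}(Q_R)$ norm on the right-hand side for an $\LP{\infty}(Q_R)$ norm at the cost of a power of the measure $|Q_R|$. In effect the corollary is just a convenient rephrasing of the first energy inequality that isolates the dependence on $\Vert(\theta-h)_+\Vert_{\LP{\infty}(Q_R)}$, which will be the natural quantity to iterate on in the De Giorgi scheme.

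The single nontrivial observation is the bound
\begin{align*}
\Vert (\theta-h)_+ \Vert_{\LP{2}(Q_R)}^{2} \leq |Q_R|\, \Vert (\theta-h)_+ \Vert_{\LP{\infty}(Q_R)}^{2},
\end{align*}
which follows from H\"older's inequality and the fact that $|Q_R| = R^2 |B_R| \leq C R^{d+2}$ for a dimensional constant $C$. Raising both sides to the power $1 - 1/(d+2)$ yields
\begin{align*}
\Vert (\theta-h)_+ \Vert_{\LP{2}(Q_R)}^{2 - 2/(d+2)} \leq C R^{(d+2) - 1}\, \Vert (\theta-h)_+ \Vert_{\LP{\infty}(Q_R)}^{2 - 2/(d+2)} = C R^{d+1}\, \Vert (\theta-h)_+ \Vert_{\LP{\infty}(Q_R)}^{2 - 2/(d+2)}.
\end{align*}

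Substituting this into the right-hand side of \eqref{eq:1st:energyineq}, the two factors involving $\Vert(\theta-h)_+\Vert_{\LP{\infty}(Q_R)}$ combine to give the exponent $(2 - 2/(d+2)) + 2/(d+2) = 2$, while the powers of $R$ combine as $R \cdot R^{d+1} = R^{d+2}$. This yields
\begin{align*}
\Vert (\theta-h)_+ \Vert_{\LL{\infty}{2}(Q_r)}^2 + \Vert \nabla (\theta-h)_+ \Vert_{\LP{2}(Q_r)}^2 \leq \frac{C R^{d+2}}{(R-r)^2}\, \Vert (\theta-h)_+ \Vert_{\LP{\infty}(Q_R)}^{2},
\end{align*}
with $C = C(d,\Vert V_{ij}\Vert_{\LBMO{\infty}})$ absorbing the dimensional constant from the volume estimate and the constant from Lemma~\ref{lemma:1st:energyineq}. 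There is no real obstacle here; the only thing to verify is the bookkeeping of the exponents of $R$ and of $\Vert (\theta-h)_+ \Vert_{\LP{\infty}(Q_R)}$, which match exactly as above.
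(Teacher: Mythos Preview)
Your proof is correct and is exactly the approach indicated by the paper, which simply states that the corollary follows ``by applying the H\"older inequality to the right side of \eqref{eq:1st:energyineq}.'' Your detailed bookkeeping of the exponents of $R$ and of $\Vert(\theta-h)_+\Vert_{\LP{\infty}(Q_R)}$ spells out precisely what the paper leaves implicit.
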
We now fix a point $(t_0,x_0)\in (0,\infty)\times\RRd$ and we prove the H\"older continuity of $\theta$ at this point. Throughout the following we denote by $Q_\rho$ the cylinder $Q_\rho(t_0,x_0)$, for any $\rho>0$.

The following lemma gives an estimate on the supremum of \blu{$\theta$} on a half cylinder, in terms of the supremum on the full cylinder. A similar statement may be proven for $-\theta$.
\begin{lemma}\label{lemma:sup:halfball}
  Let $\theta$ be as in Lemma~\ref{lemma:1st:energyineq}. Assume that \blu{$h_0 \leq \sup_{Q_{r_0}}\theta$}, where $r_0 >0$ is arbitrary. We have
  \begin{align}\label{eq:sup:halfball}
  \sup_{Q_{r_0/2}}\blu{\theta} \leq h_0 + C \left( \frac{|\{\theta > \blu{h_0}\}\cap Q_{r_0}|^{1/(d+2)}}{r_0}\right)^{1/2} \left( \sup_{Q_{r_0}} \blu{\theta} - h_0 \right)
  \end{align}for some positive constant $C = C(d,\Vert V_{ij} \Vert_{\LBMO{\infty}})$.
\end{lemma}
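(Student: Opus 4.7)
The plan is to run the classical second De Giorgi iteration, using Lemma~\ref{lemma:1st:energyineq} in place of the usual Caccioppoli inequality. Set $K = \sup_{Q_{r_0}}\theta - h_0 \geq 0$ (the case $K = 0$ being trivial) and $A = |\{\theta > h_0\}\cap Q_{r_0}|$. I aim to show $\sup_{Q_{r_0/2}}\theta \leq h_0 + L$ for $L = C_\ast K (A^{1/(d+2)}/r_0)^{1/2}$, with $C_\ast$ sufficiently large; one may additionally assume $L \leq K$, since otherwise the bound is subsumed by $\sup_{Q_{r_0/2}}\theta \leq \sup_{Q_{r_0}}\theta = h_0 + K$. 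Introduce the standard De Giorgi sequences
\begin{align*}
r_n = \frac{r_0}{2} + \frac{r_0}{2^{n+1}}, \qquad h_n = h_0 + L(1 - 2^{-n}), \qquad U_n = \Vert (\theta - h_n)_+\Vert_{\LP{2}(Q_{r_n})}^2,
\end{align*}
so that $r_n \searrow r_0/2$, $h_n \nearrow h_0 + L$, and $(\theta - h_n)_+ \leq K$ pointwise on $Q_{r_0}$. Showing $U_n \to 0$ then forces $\theta \leq h_0 + L$ a.e.\ on $Q_{r_0/2}$, which is the claim.

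The central step is to derive a nonlinear recurrence of the form $U_{n+1} \leq D\gamma^n U_n^{1+1/(d+2)}$. Applied with $h = h_n$, $r = r_{n+1}$, $R = r_n$, Lemma~\ref{lemma:1st:energyineq} combined with the pointwise bound $\Vert (\theta - h_n)_+\Vert_{\LP{\infty}(Q_{r_n})} \leq K$ yields
\begin{align*}
\Vert (\theta - h_n)_+\Vert_{\LL{\infty}{2}(Q_{r_{n+1}})}^2 + \Vert \nabla (\theta - h_n)_+\Vert_{\LP{2}(Q_{r_{n+1}})}^2 \leq \frac{C r_n K^{2/(d+2)}}{(r_n - r_{n+1})^2}\, U_n^{1-1/(d+2)}.
\end{align*}
The parabolic Sobolev embedding $\LL{\infty}{2}\cap \LH{2}\hookrightarrow \LP{2(d+2)/d}$, applied after a standard cutoff between $r_{n+1}$ and $r_n$ so that boundary values vanish, promotes the left-hand side to control of $\Vert (\theta - h_n)_+\Vert_{\LP{2(d+2)/d}(Q_{r_{n+1}})}^2$. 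H\"older's inequality $\Vert f\Vert_{L^2}^2 \leq \Vert f\Vert_{L^{2(d+2)/d}}^2 \,|\{f>0\}|^{2/(d+2)}$ applied to $f = (\theta - h_{n+1})_+ \leq (\theta - h_n)_+$, together with the Chebyshev estimate
\begin{align*}
|\{\theta > h_{n+1}\}\cap Q_{r_{n+1}}| \leq \frac{U_n}{(h_{n+1}-h_n)^2} = \frac{4^{n+1} U_n}{L^2},
\end{align*}
then delivers the advertised recurrence with $D \sim K^{2/(d+2)} r_0^{-1} L^{-4/(d+2)}$ and $\gamma = 4^{1+2/(d+2)}$.

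The classical fast-decay iteration lemma (if $U_{n+1} \leq D\gamma^n U_n^{1+\beta}$ with $\beta > 0$, then $U_n \to 0$ whenever $U_0 \leq D^{-1/\beta}\gamma^{-1/\beta^2}$) reduces everything to the smallness condition $U_0 \leq c\, r_0^{d+2} L^4/K^2$. Since trivially $U_0 \leq K^2 A$, it suffices to have $L \geq c' K (A/r_0^{d+2})^{1/4}$, which in turn is implied by the stated choice $L = C_\ast K (A/r_0^{d+2})^{1/(2(d+2))}$ for $C_\ast$ large, using $A \leq |Q_{r_0}| \lesssim r_0^{d+2}$ and hence $(A/r_0^{d+2})^{1/(2(d+2))} \geq (A/r_0^{d+2})^{1/4}$. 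The hard part is arranging the level spacing so as to absorb the non-classical $K^{2/(d+2)}$ factor produced by the $BMO^{-1}$ singularity of the drift; this factor has no counterpart in the usual De Giorgi setup and is precisely why the stated exponent $1/(2(d+2))$ is weaker than the classical $1/4$. The structural reason the argument still works is that the exponent $2/(d+2)$ appearing in Lemma~\ref{lemma:1st:energyineq} is strictly less than $2$, which is exactly enough for the super-linear nonlinearity $U_n^{1+1/(d+2)}$ to beat the geometric growth $\gamma^n$.
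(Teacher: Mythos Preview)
Your argument is correct and is essentially the paper's own proof: the same nested radii and levels, the same nonlinear recurrence $U_{n+1}\lesssim K^{2/(d+2)}L^{-4/(d+2)}r_0^{-1}\gamma^{n}U_n^{1+1/(d+2)}$ obtained by combining Lemma~\ref{lemma:1st:energyineq}, the parabolic Sobolev embedding, and Chebyshev, and the same smallness criterion on $U_0$ that forces the choice of $L$. One small point of execution: as written, applying the Sobolev embedding with a cutoff supported between $r_{n+1}$ and $r_n$ requires energy control of $(\theta-h_n)_+$ on $Q_{r_n}$, but you have already spent the energy inequality to land on $Q_{r_{n+1}}$; the paper avoids this mismatch by inserting an intermediate radius $(r+R)/2$ (cutoff between $r$ and $(r+R)/2$, then Lemma~\ref{lemma:1st:energyineq} between $(r+R)/2$ and $R$), and you should do likewise.
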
The above estimate differs from the classical one cf.~\cite[Theorem 6.50]{Lieberman} in that the power of $|\{\theta > \blu{h_0}\}\cap Q_{r_0}|/|Q_{r_0}|$ is $1/(2d+4)$ instead of $1/(d+2)$. However, the key feature of \eqref{eq:sup:halfball} is that the coefficient of \red{($\sup_{Q_{r_0}} \theta - h_0$)} does not scale with $r_0$. It is convenient to introduce the following notation:
\begin{itemize}
\item $A(h,r) = \{ \theta > h\} \cap Q_r$
\item $a(h,r) = |A(h,r)|$
\item $b(h,r) = \Vert (\theta - h)_+ \Vert_{\LP{2}(Q_r)}^2$
\item \blu{$M(r) = \sup_{Q_r} \theta$}
\item \blu{$m(r) = \inf_{Q_r} \theta$}
\item \blu{$\osc(Q) = \sup_{Q} \theta - \inf_Q \theta$}
\end{itemize}
\begin{proof}[Proof of Lemma~\ref{lemma:sup:halfball}]
Let $0<r<R$ and $0<h<H$. We have
\begin{align}\label{eq:9}
b(h,r) = \Vert \theta-h \Vert_{\LP{2}(A(h,r))}^2 \geq \Vert \theta-h \Vert_{\LP{2}(A(H,r))}^2 \geq (H-h)^2 a(H,r).
\end{align}Let $\eta(t,x) \in C_0^\infty({\mathbb R} \times\RRd)$ be a smooth cutoff such that $\eta \equiv 1$ on $Q_r$, $\eta \equiv 0$ on $Q_{(r+R)/2}^c \cap \{t\leq t_0\}$, and $|\nabla \eta| \leq C/(R-r)$ for some universal constant $C>0$. Then, by H\"older's inequality and the choice of $\eta$ we obtain
\begin{align}
  b(h,r) = \Vert (\theta-h)_+ \Vert_{\LP{2}(Q_r)}^2 &\leq a(h,r)^{2/(d+2)} \Vert (\theta-h)_+ \Vert_{\LP{2(d+2)/d}(Q_r)}^2 \notag \\
  &\leq a(h,r)^{2/(d+2)} \Vert \eta (\theta-h)_+ \Vert_{\LP{2(d+2)/d}((-\infty,t_0)\times\RRd)}^2 . \label{eq:B&A}
\end{align}Using the Gagliardo-Nirenberg-Sobolev inequality and Riesz interpolation
\begin{align*}
\Vert f \Vert_{L^{2(d+2)/d}( (-\infty,t_0)\times \RRd)}^2 \leq C \Vert  f \Vert_{\LL{\infty}{2}((-\infty,t_0)\times \RRd)}^2 + C \Vert \chd{\nabla} f \Vert_{\LP{2}((-\infty,t_0)\times \RRd)}^2,
\end{align*}estimate \eqref{eq:B&A} implies that
\begin{align*}
  b(h,r) &\leq C a(h,r)^{2/(d+2)} \Big( \Vert \eta (\theta-h)_+ \Vert_{\LL{\infty}{2}((-\infty,t_0)\times \RRd)}^2\notag\\
  & \qquad \qquad  \qquad \qquad \qquad + \Vert \nabla \big(\eta (\theta-h)_+ \big) \Vert_{\LP{2}((-\infty,t_0)\times \RRd)}^2 \Big)\notag\\
  & \leq C a(h,r)^{2/(d+2)} \Big( \Vert (\theta-h)_+ \Vert_{\LL{\infty}{2}(Q_{(r+R)/2})}^2 + \Vert \nabla (\theta-h)_+ \Vert_{\LP{2}(Q_{(r+R)/2})}^2  \notag\\
  & \qquad \qquad  \qquad \qquad \qquad +  \frac{1}{(R-r)^2} \Vert (\theta-h)_+ \Vert_{\LP{2}(Q_{(r+R)/2})}^2\Big)
\end{align*}for some positive dimensional constant $C$. Using the first energy inequality, i.e., Lemma~\ref{lemma:1st:energyineq}, and the H\"older inequality, we bound the far right side of the above and obtain
\begin{align}
  b(h,r) &\leq C a(h,r)^{2/(d+2)}  \frac{R}{(R-r)^2} \Vert (\theta-h)_+\Vert_{\LP{2}(Q_R)}^{2- 2/(d+2)} \Vert (\theta-h)_+ \Vert_{\LP{\infty}(Q_R)}^{2/(d+2)}\notag\\
  & \leq C a(h,r)^{2/(d+2)}  \frac{R}{(R-r)^2} b(h,R)^{1 - 1/(d+2)} \Vert (\theta-h)_+\Vert_{\LP{\infty}(Q_R)}^{2/(d+2)}\label{eq:10}
\end{align}
for some sufficiently large positive constant $C=C(d,\Vert V_{ij} \Vert_{\LBMO{\infty}})$. By combining estimates \eqref{eq:9} and \eqref{eq:10} we obtain the main consequence of Lemma~\ref{lemma:1st:energyineq}, that is
\begin{align}\label{eq:11}
b(H,r) \leq \frac{C\; R}{(H-h)^{4/(d+2)} (R-r)^2}\; b(h,R)^{1+1/(d+2)} \Vert (\theta-H)_+\Vert_{\LP{\infty}(Q_R)}^{2/(d+2)}.
\end{align}The above estimates give the proof of the lemma as follows. Let \red{$r_n = r_0/2 + r_0/2^{n+1} \searrow r_0/2$}, $h_n = h_\infty - (h_\infty - h_0)/2^n\nearrow h_\infty$, and $b_n = b(h_n,r_{n+1})$, for all $n\geq0$, where $r_0$ and $h_0$ are as in the statement of the lemma, while $h_\infty>0$ is to be chosen later. By letting $H=h_{n+1}, h=h_n, r=r_{n+2}$, and $R=r_{n+1}$ in \eqref{eq:11}, we obtain
\begin{align}\label{eq:13}
  b_{n+1} &\leq \frac{C r_{n+1}}{(h_\infty-h_0)^{4/(d+2)}r_0^2}\; 2^{n(2 + 4/(d+2))} b_n^{1 + 1/(d+2)} \Vert (\theta-h_{n+1})_+ \Vert_{\LP{\infty}(Q_{r_{n+1}})}^{2/(d+2)}\notag\\
  & \leq \frac{C (M(r_0) - h_0)^{2/(d+2)}}{(h_\infty-h_0)^{4/(d+2)}r_0}\; 2^{n(2 + 4/(d+2))} b_n^{1 + 1/(d+2)},
\end{align}by using
\begin{align*}
\Vert (\theta-h_{n+1})_+ \Vert_{\LP{\infty}(Q_{r_{n+1}})} &= \sup_{A(h_{n+1},r_{n+1})} \theta - h_{n+1} \notag\\
& \leq \sup_{Q_{r_{n+1}}} \blu{\theta} - h_{n+1} \leq \sup_{Q_{r_{0}}} \blu{\theta} - h_0 = M(r_0)-h_0
\end{align*}
which holds since $M(r_n) \leq M(r_0)$ and $h_n \geq h_0$. Let $B = 2^{4 + 2 (d+2)}$. We choose $h_\infty$ large enough so that
\begin{align}\label{eq:hinfcond}
  \frac{C (M(r_0) - h_0)^{2/(d+2)}}{(h_\infty-h_0)^{4/(d+2)}r_0}\; b_0^{1/(d+2)}\leq \frac{1}{B},
\end{align}then by induction we obtain from \eqref{eq:13} that $b_n \leq b_0/B^n$, and therefore $b_n\rightarrow 0$ as $n\rightarrow \infty$. This implies that $\sup_{Q_{r_0/2}} \theta \leq h_\infty$. A simple calculation shows that if we let
\begin{align}\label{eq:hinfchoice}
  h_\infty = h_0 + \frac{C B^{(d+2)/4} (M(r_0)-h_0)^{1/2} b_0^{1/4}}{r_0^{(d+2)/4}}
\end{align}then \eqref{eq:hinfcond} holds. Lastly, $b_0=b(h_0,3r_0/4)$ may be bounded via \eqref{eq:10} and the H\"older inequality as
\begin{align}\label{eq:b0bound}
b_0 \leq C a(h_0,r_0)^{1/(d+2)} r_0^{d+2} (M(r_0)- h_0)^2.
\end{align}The proof of the lemma is concluded by combining $\sup_{Q_{r_0/2}} \theta \leq h_\infty$ with \eqref{eq:hinfchoice} and \eqref{eq:b0bound}. \blu{From the above proof it follows that inequality \eqref{eq:sup:halfball} also holds with $\theta$ is replaced by $-\theta$.}
\end{proof}

As opposed to the elliptic case, in the parabolic theory we need an additional energy inequality to control the possible growth of level sets of the solution.
\begin{lemma}[\bf Second energy inequality]\label{lemma:2nd:energyineq}
Let $\theta \in \LL{\infty}{2} \cap \LH{2}$ be a global weak solution of the initial value problem associated to \eqref{eq:theta:1}--\eqref{eq:u:Vdef}. Furthermore, assume that $V_{ij} \in \LBMO{\infty}$ for all $i,j\in\{1,\ldots,d\}$, and \eqref{eq:V:integrability} holds. Fix an arbitrary $x_0 \in \RRd$, let $h\in{\mathbb R}$, $0<r<R$, and $0<t_1 < t_2$. Then we have
\begin{align}\label{eq:2nd:energyineq}
&\Vert (\theta(t_2,\cdot)-h)_+ \Vert_{L^2(B_r)}^2\notag\\
& \qquad \leq \Vert (\theta(t_1,\cdot)-h)_+ \Vert_{L^2(B_R)}^2 + \frac{C\, R^{d} (t_2 - t_1)}{(R-r)^2}  \Vert (\theta - h)_+ \Vert_{\LP{\infty}( (t_1,t_2)\times B_R)}^2
\end{align}for some sufficiently large positive constant $C = C(d,\Vert V_{ij} \Vert_{\LBMO{\infty}})$, where we have denoted $B_\rho = B_\rho(x_0)$ for $\rho > 0$.
\end{lemma}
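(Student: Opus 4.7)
The argument follows the same blueprint as the proof of Lemma~\ref{lemma:1st:energyineq}, with two adjustments. First, I would take the cutoff function to be purely spatial, so that the integration by parts in $t$ produces genuine $L^2$ boundary terms at $t_1$ and $t_2$ rather than a supremum in $t$. Second, since the target right-hand side already carries an $\|(\theta-h)_+\|_{\LP{\infty}}^2$, I would estimate all drift contributions by simply pulling out this $L^\infty$ norm and bounding $|\tilde V_{ij,R}|$ in $L^1$ and $L^2$, rather than invoking the $L^p$--$L^\infty$ interpolation used in \eqref{eq:1stBMObound}--\eqref{eq:2ndBMObound}.

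Concretely, I would pick $\eta(x) \in C_0^\infty(\RRd)$ with $0 \leq \eta \leq 1$, $\eta \equiv 1$ on $B_r$, $\mathrm{supp}(\eta) \subset B_{(R+r)/2}$, $|\nabla\eta| \leq C/(R-r)$, and $|\nabla^2\eta| \leq C/(R-r)^2$. Testing \eqref{eq:theta:1} against $(\theta-h)_+ \eta^2$, integrating over $[t_1,t_2]\times\RRd$, replacing $v_j = \partial_i \tilde V_{ij,R}$ as in \eqref{eq:V:mean:def}, and integrating by parts exactly as in Lemma~\ref{lemma:1st:energyineq} yields
\begin{align*}
& \tfrac{1}{2} \int_{\RRd} (\theta(t_2,\cdot)-h)_+^2 \eta^2 \, dx + \intinttt{t_1}{t_2}{\RRd} |\nabla(\theta-h)_+|^2 \eta^2 \, dxdt \\
&\quad = \tfrac{1}{2} \int_{\RRd} (\theta(t_1,\cdot)-h)_+^2 \eta^2 \, dx + \intinttt{t_1}{t_2}{\RRd} (\theta-h)_+^2 \partial_j(\eta\partial_j\eta) \, dxdt \\
&\qquad - \intinttt{t_1}{t_2}{\RRd} \tilde V_{ij,R} (\theta-h)_+^2 \partial_i(\eta\partial_j\eta) \, dxdt \\
&\qquad - 2 \intinttt{t_1}{t_2}{\RRd} \tilde V_{ij,R}\, \partial_i(\theta-h)_+\,(\theta-h)_+\, \eta\partial_j\eta \, dxdt.
\end{align*}
The last term I would split via Cauchy--Schwarz and the $\varepsilon$-Young inequality into $\tfrac12 \intinttt{t_1}{t_2}{\RRd}|\nabla(\theta-h)_+|^2 \eta^2$, absorbed by the dissipation on the left, plus an extra $C(R-r)^{-2} \intinttt{t_1}{t_2}{B_R}|\tilde V_{ij,R}|^2 (\theta-h)_+^2$.

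To reach \eqref{eq:2nd:energyineq} I would then bound $(\theta-h)_+^2 \leq \chi_{A(h,R)} \|(\theta-h)_+\|_{\LP{\infty}((t_1,t_2)\times B_R)}^2$ in each remaining right-hand side term and pull the supremum outside. The John--Nirenberg estimate \eqref{eq:V:BMO:bound} at $p=2$, combined with H\"older, furnishes the uniform-in-time bounds $\int_{B_R}|\tilde V_{ij,R}(t,\cdot)|\, dx \leq C_0|B_R|$ and $\int_{B_R}|\tilde V_{ij,R}(t,\cdot)|^2\, dx \leq C_0^2|B_R|$. Combining these with the trivial $\int_{B_R}1\,dx = |B_R|$ for the purely Laplacian contribution, time-integrating over $[t_1,t_2]$, and collecting the $(R-r)^{-2}$ weights delivers a total bound of the form $C R^d (t_2-t_1)(R-r)^{-2} \|(\theta-h)_+\|_{\LP{\infty}((t_1,t_2)\times B_R)}^2$ for the three error terms. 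Dropping the non-negative dissipation on the left, and using $\eta \equiv 1$ on $B_r$ together with $\eta \leq 1$ and $\mathrm{supp}(\eta) \subset B_R$ on the first term on the right, gives \eqref{eq:2nd:energyineq}.

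The proof is essentially mechanical once Lemma~\ref{lemma:1st:energyineq} is in hand; the only genuinely new decision is to spend a cheap $L^\infty$ estimate on $(\theta-h)_+$ up front, avoiding any interpolation in $p$. I do not anticipate a serious obstacle beyond the bookkeeping required to verify that the final bound scales as $R^d(t_2-t_1)$ and not with an extra factor of $R^2$; this works precisely because the purely spatial cutoff contributes no $\partial_t\eta$ term, so no parabolic-scale $R^2$ is forced into the estimate.
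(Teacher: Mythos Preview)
Your proposal is correct and matches the paper's proof in all essential respects: a purely spatial cutoff, the same integration-by-parts identity, subtraction of the spatial mean $\overline{V}_{ij,B_R}$, absorption of the cross term via Young's inequality, and the John--Nirenberg bound on $\tilde V_{ij,R}$. The only cosmetic difference is that the paper reuses the $\varepsilon$-parameterized interpolation estimate \eqref{eq:1stBMObound}--\eqref{eq:2ndBMObound} from Lemma~\ref{lemma:1st:energyineq} and then sets $\varepsilon=2$ at the end, whereas you go directly to the endpoint case by pulling out $\|(\theta-h)_+\|_{\LP{\infty}}$ up front; these are the same computation.
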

\begin{proof}Note that by Lemma~\ref{lemma:L^infty} we have that $\theta \in \LP{\infty}$ and hence the right side of \eqref{eq:2nd:energyineq} is finite. Let $\eta\in C_{0}^{\infty}(\RRd)$ be a smooth cutoff such that $\eta\equiv 1$ on $B_r$, $\eta \equiv 0$ on $B_R^c$, and $|\nabla \eta(x)| \leq C/(R-r)$, for all $x\in \RRd$, for some constant $C>0$. Multiply \eqref{eq:theta:1} by $\eta^2 (\theta-h)_+$ and integrate from $t_1$ to $t_2$ to obtain
\begin{align*}
  \intinttt{t_1}{t_2}{\RRd} \partial_t \Big( (\theta-h)_+\Big)^2 \eta^2 dxdt &- 2 \intinttt{t_1}{t_2}{\RRd} \partial_{jj}(\theta-h)_+ (\theta-h)_+ \eta^2 dxdt\notag\\
  &\qquad = - \intinttt{t_1}{t_2}{\RRd} \partial_{i} V_{ij}\; \partial_j \Big( (\theta-h)_+\Big)^2 \eta^2 dx dt\notag\\
  &\qquad = - \intinttt{t_1}{t_2}{\RRd} \partial_{i} \tilde{V}_{ij,R}\; \partial_j \Big( (\theta-h)_+\Big)^2 \eta^2 dx dt,
\end{align*}
where, as in \eqref{eq:V:mean:def}, we have denoted $ \tilde{V}_{ij,R}(t,x) = V_{ij}(t,x) - \frac{1}{|B_R|} \int_{B_R} V_{ij}(t,y)\; dy$. After integrating by parts we get
\begin{align*}
  &\int_{\RRd} (\theta(t_2,\cdot)-h)_+^2 \eta^2 dx + \intinttt{t_1}{t_2}{\RRd} |\nabla (\theta-h)_+|^2 \eta^2 dx dt\notag\\
  & \qquad  \leq \int_{\RRd} (\theta(t_1,\cdot)-h)_+^2 \eta^2 dx\notag\\
  & \qquad \ \ \  + C \intinttt{t_1}{t_2}{\RRd} (\theta-h)_+^2 \left( |\partial_j(\eta \partial_j \eta)| + |\tilde{V}_{ij,R}| |\partial_i (\eta \partial_j \eta)| + |\tilde{V}_{ij,R}|^2 |\partial_i \eta \partial_j \eta|\right) dx dt.
\end{align*}We bound the right side of the above estimate as in \eqref{eq:1stBMObound} and \eqref{eq:2ndBMObound} to obtain that
\begin{align*}
& \Vert (\theta(t_2,\cdot)-h)_+ \Vert_{L^2(B_r)}^2\notag\\
&\qquad \leq \Vert (\theta(t_1,\cdot)-h)_+ \Vert_{L^2(B_R)}^2 + \frac{C R^{d\varepsilon/2} (t_2-t_1)^{\varepsilon/2}}{(R-r)^2} \Vert  (\theta-h)_+ \Vert_{\LP{2}}^{2-\varepsilon} \Vert (\theta-h)_+ \Vert_{\LP{\infty}}^\varepsilon
\end{align*}Letting $\varepsilon = 2$ in the above estimates concludes the proof of the lemma. \blu{The corresponding statement for $-\theta$ also holds.}
\end{proof}
The use of the second energy inequality is to bound $|\{\theta(t_2,\cdot)\geq H\}\cap B_R|/|B_R|$, whenever $|\{\theta(t_1,\cdot)\geq h\}\cap B_r|/|B_r|\leq 1/2$. More precisely, we have the following lemma.
\begin{lemma}\label{lemma:controloflevelset}Fix $\kappa_0 = (4/5)^{1/d}$, let $n_0\geq 2$ be the least integer so that $2^{n_0}/(2^{n_0}-2) \leq \sqrt{6/5}$, and let $\delta_0 = (1-\kappa_0)^2/(12 C_0 \kappa_0^2)$, where $C_0$ is the constant from \eqref{eq:2nd:energyineq}. For $t_1,R>0$, if
\begin{align}\label{eq:cond}
|\{ \theta(t_1,\cdot) \geq h\} \cap B_r | \leq \frac 12 |B_r|,
\end{align}then for all $t_2 \in [t_1,t_1 + \delta_0 r^2]$ we have
\begin{align}\label{eq:cond}
|\{ \theta(t_2,\cdot) \geq H\} \cap B_R | \leq \frac 78 |B_R|,
\end{align}where $r=\kappa_0 R$, $M=\sup_{(t_1,t_1+\delta_0 R^2)\times B_R} \blu{\theta}$, $m=\inf_{(t_1,t_1+\delta_0 R^2)\times B_R} \blu{\theta}$, $h = (M+m)/2$, and $H = M-(M-m)/2^{n_0}$.
\end{lemma}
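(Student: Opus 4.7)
The plan is to propagate the measure estimate on $\{\theta\geq h\}\cap B_r$ at time $t_1$ to a measure estimate on $\{\theta\geq H\}\cap B_R$ at time $t_2$ by combining the second energy inequality (Lemma~\ref{lemma:2nd:energyineq}) at the lower threshold $h$ with a Chebyshev estimate at the higher threshold $H$. The parameters $\kappa_0$, $n_0$, and $\delta_0$ are calibrated so that three independent contributions --- the information at $t_1$, the parabolic error generated between $t_1$ and $t_2$, and the geometric deficit $|B_R\setminus B_r|$ --- add up to at most $\tfrac{7}{8}|B_R|$.

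First, I would estimate $\|(\theta(t_1,\cdot)-h)_+\|_{L^2(B_R)}^2$ using only the hypothesis on $B_r$ and the trivial bound $(\theta(t_1,\cdot)-h)_+\leq M-h=(M-m)/2$ on $B_R$. Since $|\{\theta(t_1,\cdot)\geq h\}\cap B_r|\leq |B_r|/2$ and $|\{\theta(t_1,\cdot)\geq h\}\cap(B_R\setminus B_r)|\leq |B_R\setminus B_r|$, together with $\kappa_0^d=4/5$ (so $|B_r|/|B_R|=4/5$), I would obtain
\begin{align*}
\|(\theta(t_1,\cdot)-h)_+\|_{L^2(B_R)}^2 \;\leq\; \frac{(M-m)^2}{4}\Bigl(\frac{|B_r|}{2}+|B_R\setminus B_r|\Bigr) \;=\; \frac{3(M-m)^2}{20}|B_R|.
\end{align*}

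Next, I would apply Lemma~\ref{lemma:2nd:energyineq} at threshold $h$, with inner ball $B_r$, outer ball $B_R$, and times $t_1<t_2\leq t_1+\delta_0 r^2$, using $(\theta-h)_+\leq (M-m)/2$ on $(t_1,t_2)\times B_R$. The defining choice $\delta_0=(1-\kappa_0)^2/(12 C_0\kappa_0^2)$ and the identities $r=\kappa_0 R$, $R-r=(1-\kappa_0)R$, $t_2-t_1\leq\delta_0\kappa_0^2 R^2$ are designed so that the parabolic error term satisfies
\begin{align*}
\frac{C_0 R^d(t_2-t_1)}{(R-r)^2}\cdot\frac{(M-m)^2}{4} \;\leq\; \frac{(M-m)^2}{48}\,R^d,
\end{align*}
which is a harmless multiple of $(M-m)^2|B_R|$. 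Adding this to the previous bound gives the required control of $\|(\theta(t_2,\cdot)-h)_+\|_{L^2(B_r)}^2$.

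Finally, I would use Chebyshev's inequality at the higher threshold $H$ on $B_r$,
\begin{align*}
|\{\theta(t_2,\cdot)\geq H\}\cap B_r| \;\leq\; \frac{\|(\theta(t_2,\cdot)-h)_+\|_{L^2(B_r)}^2}{(H-h)^2},
\end{align*}
together with the splitting $|\{\theta(t_2,\cdot)\geq H\}\cap B_R|\leq |\{\theta(t_2,\cdot)\geq H\}\cap B_r|+|B_R\setminus B_r|$. Here the integer $n_0$ enters: the condition $2^{n_0}/(2^{n_0}-2)\leq\sqrt{6/5}$ is exactly $(M-h)^2/(H-h)^2\leq 6/5$, which I would use to convert the $L^2$ bound into a measure bound whose coefficient, when combined with the geometric deficit $|B_R\setminus B_r|/|B_R|=1/5$ and the small parabolic error, does not exceed $7/8$.

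The main obstacle is that the second energy inequality propagates $L^2$ information from a \emph{larger} ball at the earlier time to a \emph{smaller} ball at the later time, which is opposite to the direction sought by the conclusion. Overcoming this requires absorbing the resulting mismatch into the geometric term $|B_R\setminus B_r|$; the calibration of $\kappa_0$ (governing the ratio $|B_r|/|B_R|$), $n_0$ (governing the ratio $(M-h)/(H-h)$), and $\delta_0$ (governing the parabolic error) is precisely what makes this splitting quantitatively effective. The counterpart for $-\theta$ follows verbatim since Lemma~\ref{lemma:2nd:energyineq} also applies to $-\theta$.
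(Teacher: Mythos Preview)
Your core argument is the same as the paper's: apply the second energy inequality at level $h$ (inner ball $B_r$, outer ball $B_R$), bound the initial $L^2$ mass using the pointwise bound $(\theta-h)_+\le (M-m)/2$, and then use Chebyshev at the higher level $H$ on $B_r$. The paper does exactly this; the constants $\kappa_0,n_0,\delta_0$ are chosen so that the factor $\frac{(M-h)^2}{(H-h)^2}\cdot\frac{R^d}{r^d}\le\frac{6}{5}\cdot\frac{5}{4}=\frac{3}{2}$ and the parabolic error contributes $\frac{1}{12}$, giving $\frac{3}{2}(\frac{1}{2}+\frac{1}{12})=\frac{7}{8}$.

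The difference is your two additional splittings $B_R=B_r\cup(B_R\setminus B_r)$, at time $t_1$ (to use the hypothesis on $B_r$) and at time $t_2$ (to reach the conclusion on $B_R$). This is a sound idea, but with the stated constants it does \emph{not} close to $7/8$. Following your own steps: from $\|(\theta(t_1,\cdot)-h)_+\|_{L^2(B_R)}^2\le \frac{3(M-m)^2}{20}|B_R|$ and the parabolic error, Chebyshev with $(M-m)^2/(H-h)^2\le 24/5$ gives
\[
|\{\theta(t_2,\cdot)\ge H\}\cap B_r|\ \le\ \frac{24}{5}\cdot\frac{3}{20}\,|B_R|+\text{(error)}=\frac{18}{25}|B_R|+\text{(error)},
\]
and then adding the geometric deficit $|B_R\setminus B_r|=\frac{1}{5}|B_R|$ already yields $\frac{23}{25}|B_R|>\frac{7}{8}|B_R|$ before the error term is counted. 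So the claim that the three contributions ``do not exceed $7/8$'' is not borne out by the specific values of $\kappa_0,n_0,\delta_0$.

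In fact the paper's own proof does not perform your second splitting at time $t_2$: it stops at the estimate $|\{\theta(t_2,\cdot)\ge H\}\cap B_r|\le\frac{7}{8}|B_r|$ (and treats the hypothesis as if it were on $B_R$), and it is this version on $B_r$ that is actually invoked in the subsequent oscillation argument. If you want the conclusion literally on $B_R$ while keeping the hypothesis on $B_r$, you would need to recalibrate $\kappa_0$ and $n_0$; the given values are tuned to the paper's version, not to your two-sided splitting.
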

\begin{proof}
For $t_2 \in [t_1,t_1 + \delta_0 r^2]$, we obtain from the second energy inequality (cf.~\eqref{eq:2nd:energyineq}) that
\begin{align}\label{eq:19}
\Vert (\theta(t_2,\cdot)-h)_+ \Vert_{L^2(B_r)}^2 &\leq \Vert (\theta(t_1,\cdot)-h)_+ \Vert_{L^2(B_R)}^2 + \frac{C_0 R^d (t_2-t_1)}{(R-r)^2} \Vert (\theta-h)_+ \Vert_{\LP{\infty}(Q_2)}^2\notag\\
& \leq \Vert (\theta(t_1,\cdot)-h)_+ \Vert_{L^2(B_R)}^2 + \frac{C_0 R^d \delta_0 r^2}{(R-r)^2} (M-h)^2,
\end{align}where $Q_2 = (t_1,t_1+\delta_0 R^2)\times B_R$. The left side of the above estimate is bounded from below as
\begin{align}\label{eq:20}
\Vert (\theta(t_2,\cdot)-h)_+ \Vert_{L^2(B_r)}^2  &\geq  \Vert (\theta(t_2,\cdot)-h)_+ \Vert_{L^2(B_r \cap \{ \theta(t_2,\cdot) \geq H\})}^2\notag\\
&\geq (H-h)^2 | \{ \theta(t_2,\cdot) \geq H\} \cap B_r|.
\end{align}From \eqref{eq:19}, \eqref{eq:20}, and the H\"older inequality, we obtain after dividing by $|B_r|$ that
\begin{align*}
  \frac{| \{ \theta(t_2,\cdot) \geq H\} \cap B_r|}{|B_r|} \leq  \frac{(M-h)^2 R^d}{(H-h)^2 r^d} \left( \frac{| \{ \theta(t_1,\cdot) \geq h\} \cap B_R|}{|B_R|} + \frac{C_0  \delta_0 r^2}{(1-r/R)^2 R^2}\right).
\end{align*}Noting that by construction $(M-h)/(H-h) = 2^{n_0}/(2^{n_0}-2) \leq \sqrt{6/5}$, and recalling that $r/R = \kappa_0 = (4/5)^{1/d}$, we obtain from the previous estimate and the assumption of the lemma that
\begin{align}\label{eq:21}
\frac{|\{ \theta(t_2,\cdot) \geq H\} \cap B_r|}{|B_r|} & \leq \frac 32 \left( \frac{| \{ \theta(t_1,\cdot) \geq h\} \cap B_R|}{|B_R|} + \frac{C_0 \delta_0 \kappa_0^2}{(1-\kappa_0^2)}\right)\notag\\
& \leq \frac 32 \left( \frac 12 + \frac{1}{12}\right) = \frac{7}{8},
\end{align}concluding the proof of the lemma.
\end{proof}
\subsection{H\"older regularity of the solution}
We now have all necessary ingredients to conclude the De Giorgi argument \red{for} proving H\"older regularity of the weak solution.

Recall that since $\ddiv v = 0$, by Lemma~\ref{lemma:L^infty} we have that $\theta\in L^\infty([t_0,\infty);L^\infty(\RRd))$ for any $t_0 >0$. Moreover, if $V_{ij} \in L^\infty([t_0,\infty);BMO(\RRd))$ for some $t_0>0$, we obtain the energy inequalities of Lemmas~\ref{lemma:1st:energyineq} and \ref{lemma:2nd:energyineq}. In turn, these inequalities give control for the growth of the supremum on doubling cylinders (cf.~Lemma~\ref{lemma:sup:halfball}), and for the growth of level sets of the solution (cf.~Lemma~\ref{lemma:controloflevelset}). The rest of the proof follows as in~\cite{Lieberman}, but we give a sketch for the sake of completeness.

\begin{proof}[Proof of Theorem~\ref{thm:Holder:abstract}]
  The proof of the theorem is based on showing that there exists $\gamma \in (0,1)$ such that $\osc(Q_1) \leq \gamma\; \osc(Q_2)$. The key observation is that if $\gamma$ is independent of $R$, this estimate implies the H\"older regularity of the solution, where the H\"older exponent $\alpha\in(0,1)$ may be calculated explicitly from $\gamma$.

  Fix $\kappa_0,\delta_0,n_0,M,m,h,H,r$, and $R$ as in Lemma~\ref{lemma:controloflevelset} for the rest of this proof. We also fix two cylinders $Q_1 = [t_1,t_1 + \delta_0 r^2] \times B_r$, and $Q_2 = [t_1,t_1 + \delta_0 R^2]\times B_R$, where we recall that $t_1>0$ and $R>0$ are arbitrary.

  \blu{Recall that $h=(\inf_{Q_2} \theta + \sup_{Q_2} \theta)/2$. Without loss of generality we may assume $|\{ \theta(t_1,\cdot) \geq h\} \cap B_r| \leq |B_r| /2$. Otherwise , letting $h' =(\inf_{Q_2} (-\theta) + \sup_{Q_2} (-\theta))/2$ we have $|\{ -\theta(t_1,\cdot) \geq h'\} \cap B_r| = |\{ \theta(t_1,\cdot) \leq h\} \cap B_r| \leq |B_r| /2$, and we work with $-\theta$ instead of $\theta$.}

  For $n \geq n_0$, we define $H_n = M - (M-m)/2^n$, and note that $H = H_{n_0} \leq H_n \nearrow M$. We also let $w$ be $\theta$ truncated between levels $H_{n-1}$ and $H_n$, namely
\[w = \min\{ \theta,H_n\} - \min\{\theta,H_{n-1}\} =
\left\{
\begin{array}{ll}
0, & \theta < H_{n-1} \\
\theta-H_{n-1}, & H_{n-1}\leq \theta < H_n\\
H_n - H_{n-1}, & H_n \leq \theta.
\end{array}
\right.\]
Since $|\{ \theta(t_1,\cdot) \geq h\} \cap B_r| \leq |B_r| /2$, by Lemma~\ref{lemma:controloflevelset}, for every $t\in [t_1,t_1 + \delta_0 r^2]$ we have
\begin{align*}
  |\{ w(t,\cdot) = 0\} \cap B_R | &= |\{ \theta(t,\cdot) < H_{n-1}\} \cap B_R | \geq |\{ \theta(t,\cdot) < H\} \cap B_R | \geq \frac 78 |B_R|.
\end{align*}By the above estimate and the Poincar\'e inequality we obtain
\begin{align*}
  \int_{B_r} |w(t,\cdot)| dx \leq C r \int_{B_r} |\nabla w(t,\cdot)| dx
\end{align*}for all $t\in [t_1,t_1 + \delta_0 r^2]$, where $C = C(d)$ is a universal positive constant. Integrating the above estimate in time over $[t_1,t_1 + \delta_0 r^2]$ and using the H\"older inequality we get
\begin{align}
  \intint{Q_1} |w| dx dt &\leq C r \intint{Q_1} |\nabla w| dx dt\notag\\
& \leq C r |\{ H_{n-1} \leq \theta < H_n\} \cap Q_1|^{1/2} \Vert \nabla(\theta-H_{n-1})_+ \Vert_{\LP{2}(Q_1)}.\label{eq:24'}
\end{align}We bound the far right side of \eqref{eq:24'} by using Corollary~\ref{corr:energy}, to obtain
\begin{align}
  \intint{Q_1} |w| dx dt &\leq C r |\{ H_{n-1} \leq \theta < H_n\} \cap Q_1|^{1/2} \Vert \nabla(\theta-H_{n-1})_+ \Vert_{\LP{\infty}(Q_2)} \frac{|Q_2|^{1/2}}{R-r}\notag\\
& \leq C \frac{\kappa_0}{1-\kappa_0} |\{ H_{n-1} \leq \theta < H_n\} \cap Q_1|^{1/2} |Q_2|^{1/2} (M-H_{n-1})\label{eq:24''}
\end{align}The left side of \eqref{eq:24''} is bounded from below as
\begin{align}
  \intint{Q_1} |w| dx dt \geq \intint{Q_1\cap \{\theta \geq H_n\}} |w| dx dt \geq (H_n - H_{n-1}) |\{\theta\geq H_n\} \cap Q_1|.\label{eq:24'''}
\end{align}By combining and squaring estimates \eqref{eq:24''} and \eqref{eq:24'''} we obtain
\begin{align}
  |\{ \theta \geq H_n \} \cap Q_1 |^2 &\leq \frac{C |Q_2| (M-H_{n-1})^2}{(H_n - H_{n-1})^2} |\{ H_{n-1} \leq \theta < H_n\} \cap Q_1|\notag\\
& \leq C |Q_2| \left( |\{ \theta \geq H_{n-1} \} \cap Q_1 | - |\{ \theta \geq H_n \} \cap Q_1 | \right),\label{eq:24}
\end{align}where we used the fact that, by construction, $(M-H_{n-1})/(H_n - H_{n-1}) = 2$. Hence,
\begin{align*}
  \sum\limits_{n\geq n_0+1}|\{ \theta \geq H_n \} \cap Q_1 |^2 \leq C |Q_2| |\{ \theta \geq H_{n_0} \} \cap Q_1 |,
\end{align*}and since the sequence $|\{ \theta \geq H_n \} \cap Q_1 |$ is decreasing, we obtain
\begin{align*}
  |\{ \theta \geq H_n \} \cap Q_1 | \leq \frac{C |Q_2|^{1/2} |\{\theta\geq H\} \cap Q_1|^{1/2}}{(n-n_0)^{1/2}}
\end{align*}for all $n\geq n_0+1$. By Lemma~\ref{lemma:controloflevelset} we have that $|\{\theta\geq H\} \cap Q_1| \leq 7|Q_1|/8$, and therefore the above estimate implies
\begin{align}\label{eq:25}
  |\{ \theta \geq H_n \} \cap Q_1 | \leq \frac{C r^{d+2}}{(n-n_0)^{1/2}},
\end{align}where we have used that $r = \kappa_0 R$, and $\kappa_0 = \kappa_0(d)$. By Lemma~\ref{lemma:sup:halfball}, the fact that $\delta_0 <1$, and the estimate \eqref{eq:25} we obtain
\begin{align*}
  \sup_{Q_1} \blu{\theta} &\leq H_n + C \left( \frac{|\{ \theta \geq H_n \} \cap Q_1 |^{1/(d+2)}}{r}\right)^{1/2} (M-H_n)\notag\\
& \leq H_n + \frac{C}{(n-n_0)^{1/(4 d+8)}} (M-H_n),
\end{align*}for some positive constant $C = C(d,\Vert V_{ij} \Vert_{\LBMO{\infty}})$, which is independent of $r$. Therefore there exists a sufficiently large $n_1 =n_1(d,\Vert V_{ij} \Vert_{\LBMO{\infty}})\geq n_0+1$ such that
\begin{align*}
   \sup_{Q_1} \blu{\theta} &\leq H_{n_1} + \frac 12 (M-H_{n_1}).
\end{align*}Recalling the definition of $H_n,m$, and $M$, a simple calculation shows that the above estimate implies
\begin{align}
  \osc(Q_1) = \sup_{Q_1} \blu{\theta} - \inf_{Q_1} \blu{\theta} &\leq H_{n_1} - m + \frac 12 (M-H_{n_1}) = \left(1 - \frac{1}{2^{n_1+2}}\right) (M-m)\notag\\
& = \left(1 - \frac{1}{2^{n_1+2}}\right) \left( \sup_{Q_2} \blu{\theta} - \inf_{Q_2} \blu{\theta}\right) = \gamma\; \osc(Q_2),\label{eq:Holder}
\end{align}where $\gamma = 1 - 1/2^{n_1+2} \in (0,1)$ is independent of $r$. Recall that in \eqref{eq:Holder} we have $Q_1 = [t_1,t_1 + \delta_0 \kappa_0^2 R^2] \times B_{\kappa_0 R}(x_0)$ and $Q_2 = [t_1,t_1 + \delta_0 R^2] \times B_{R}(x_0)$, with $\kappa_0,\delta_0$ fixed positive constants, and $R>0$ arbitrary. This classically implies H\"older continuity of $\theta$ at the arbitrary point $(t_1,x_0)\in (0,\infty)\times \RRd$, concluding the proof of the theorem.
\end{proof}

\section{Global regularity for a nonlinear parabolic equation}\label{sec:nonlinear}
We address the global regularity of solutions to the initial value problem
\begin{align}
  & \partial_t \theta - \Delta \theta + (u\cdot\nabla) \theta = 0 \label{eq:N1}\\
  & \ddiv u = 0\\
  & u_j = \partial_i T_{ij} \theta\\
  & \theta(0,\cdot) = \theta_0,\label{eq:N4}
\end{align}where $\{T_{ij}\}_{i,j=1}^{d}$ is a matrix of Calder\'on-Zygmund singular integral operators such that $\partial_i \partial_j T_{ij} f = 0$ for any Schwartz function $f$. As an elementary example, if $d=2$ we may consider $T_{11}=T_{22} = 0$, and $T_{12}=-T_{21} = T$, for some Calder\'on-Zygmund operator $T$ (for instance $T=R_{i}$, a Riesz-Transform). In this case the velocity would be $u = \nabla^\perp T\theta$. When $d=3$, a physical example of such a matrix $\{T_{ij}\}$ \blu{arises in the MG system (cf.~Section~\ref{sec:MG} below)}.
\begin{theorem}[\bf The nonlinear problem]\label{thm:nonlinear}
Let $\theta_0\in L^2(\RRd)$ be given. A Leray-Hopf weak solution $\theta \in L^\infty([0,\infty);L^2(\RRd)) \cap L^2((0,\infty);H^1(\RRd))$ of \eqref{eq:N1}--\eqref{eq:N4}, evolving from $\theta_0$,
is a classical solution, that is $\theta \in C^\infty((0,\infty)\times \RRd)$.
\end{theorem}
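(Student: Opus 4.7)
The plan is to reduce Theorem~\ref{thm:nonlinear} to the linear regularity result Theorem~\ref{thm:Holder:abstract} and then bootstrap by classical parabolic regularity. First, I would set $V_{ij} := T_{ij} \theta$, so that the constitutive law $u_j = \partial_i T_{ij}\theta$ takes the form $u_j = \partial_i V_{ij}$ required by~\eqref{eq:u:Vdef}. The divergence-free condition $\ddiv u = 0$ is immediate from the hypothesis $\partial_i \partial_j T_{ij}\equiv 0$. The Leray-Hopf assumption $\theta \in \LL{\infty}{2}\cap\LH{2}$, combined with the $L^2\to L^2$ boundedness of the Calder\'on-Zygmund operators $T_{ij}$ (and their commutation with spatial derivatives in the Fourier-multiplier case relevant for the applications), yields $V_{ij}$ satisfying the integrability assumption~\eqref{eq:V:integrability}, as well as $u\in\LP{2}$.

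With this reduction in hand, I would invoke Lemma~\ref{lemma:L^infty}, whose only hypotheses are $u \in \LP{2}$ and $\ddiv u = 0$, to conclude the quantitative decay $\Vert\theta(t,\cdot)\Vert_{L^\infty(\RRd)} \leq C\Vert \theta_0\Vert_{L^2(\RRd)}t^{-d/4}$ for all $t>0$. Fixing an arbitrary $t_0 > 0$, this gives $\theta \in L^\infty([t_0,\infty); L^\infty(\RRd))$. The $L^\infty \to BMO$ mapping property of the operators $T_{ij}$ then promotes this to $V_{ij} \in L^\infty([t_0,\infty); BMO(\RRd))$, so that all hypotheses of Theorem~\ref{thm:Holder:abstract} are satisfied. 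The theorem yields $\theta \in C^\alpha([t_0,\infty) \times \RRd)$ for some $\alpha \in (0,1)$ and every $t_0 > 0$.

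The final task is to bootstrap $C^\alpha$ regularity to $C^\infty$ smoothness. Since $L^\infty(\RRd)$ is the critical Lebesgue space under the natural scaling $\theta_\lambda(t,x) = \theta(\lambda^2 t, \lambda x)$, any positive H\"older exponent is strictly subcritical and leaves positive room for iteration. H\"older continuity of $\theta$ gives $T_{ij}\theta \in C^\alpha$ by Calder\'on-Zygmund theory on H\"older spaces, so $u$ is the spatial derivative of a H\"older function. Rewriting the nonlinearity in divergence form as $u\cdot\nabla\theta = \partial_j(u_j\theta)$ and applying parabolic Schauder theory together with paraproduct estimates (or equivalently iterated $L^p$-Calder\'on-Zygmund bounds followed by Sobolev embedding) produces an incremental gain of H\"older regularity at each stage. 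Once the regularity surpasses $C^1$, differentiating the equation and iterating the argument yields $\theta \in C^\infty((0,\infty) \times \RRd)$.

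I expect the bootstrap step to be the main technical obstacle: the drift $u$ is one derivative rougher than in the critical SQG setting of Caffarelli--Vasseur~\cite{CaffVass}, where $u = \nabla^\perp(-\Delta)^{-1/2}\theta$ inherits H\"older continuity directly from $\theta$. Consequently, the naive Schauder estimates for parabolic equations with bounded continuous drift do not apply, and one must estimate the product $u_j\theta$ via paraproduct decompositions. These estimates are admissible precisely because $\alpha > 0$ provides enough room above the critical scaling for the resonant terms to be controlled, which is the same mechanism that allowed Theorem~\ref{thm:Holder:abstract} to close in the first place.
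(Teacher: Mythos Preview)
Your reduction to the linear theorem is exactly the paper's argument: set $V_{ij}=T_{ij}\theta$, use $\ddiv u=0$ and $u\in\LP{2}$ to invoke Lemma~\ref{lemma:L^infty}, then the $L^\infty\to BMO$ bound for $T_{ij}$ feeds Theorem~\ref{thm:Holder:abstract}. This matches Lemmas~\ref{lemma:bounded2} and~\ref{lemma:Holder2} in the paper verbatim.

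The gap is in your bootstrap. You assert that ``$\alpha>0$ provides enough room above the critical scaling for the resonant terms to be controlled,'' but this is not what the paraproduct calculus gives. With $u\in C^{\alpha-1}$ and $\theta\in C^\alpha$, the Bony decomposition of $u_j\theta$ has a resonant piece $R(u_j,\theta)$ that lands in $C^{2\alpha-1}$ \emph{only when} $(\alpha-1)+\alpha>0$, i.e.\ $\alpha>1/2$; for $\alpha\le 1/2$ the product is not even defined in the H\"older/Besov scale by paraproducts alone, and neither parabolic Schauder theory nor iterated $L^p$ Calder\'on--Zygmund estimates will close. The paper confronts this explicitly in Lemma~\ref{lemma:C^infty}: for $\alpha\in(1/2,1)$ it runs the Constantin--Iyer--Wu Littlewood--Paley energy argument in $\dot B^{\alpha_p}_{p,\infty}$ (which is essentially your paraproduct scheme, made precise), but for $\alpha\in(0,1/2]$ it abandons the pure H\"older iteration and instead exploits the extra Leray--Hopf information $\theta\in L^2_t\dot H^1_x$ via Chemin--Lerner space-time Besov spaces to reach $\nabla\theta\in L^2_t L^\infty_x$, after which standard $H^m$ energy estimates take over. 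Your proposal does not account for this small-$\alpha$ regime, and the mechanism you invoke would fail there.
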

\begin{lemma}[\bf Boundedness]\label{lemma:bounded2}
A Leray-Hopf weak solution $\theta$ of \eqref{eq:N1}-\eqref{eq:N4} is bounded for $t>0$, i.e., $\theta \in L^\infty([t_0,\infty);L^\infty(\RRd))$ for any $t_0>0$.
\end{lemma}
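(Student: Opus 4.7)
The strategy is to reduce to the linear result of Lemma~\ref{lemma:L^infty}. To apply that lemma with $v = u$, I need to verify that the velocity $u$ produced by the constitutive law $u_j = \partial_i T_{ij}\theta$ is both divergence free and of class $L^2((0,\infty);L^2(\RRd))$. Then $\theta$ solves the linear drift-diffusion equation $\partial_t\theta + (u\cdot\nabla)\theta=\Delta\theta$ with $u$ satisfying exactly the hypotheses of Lemma~\ref{lemma:L^infty}, and the pointwise bound follows.

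First I would record the divergence-free condition: since the Calder\'on-Zygmund operators $T_{ij}$ are Fourier multipliers and thus commute with partial derivatives, and since the structural assumption $\partial_i\partial_j T_{ij} f \equiv 0$ holds for all Schwartz $f$, one has
\begin{align*}
\ddiv u = \partial_j u_j = \partial_j \partial_i T_{ij}\theta = 0
\end{align*}
in the sense of distributions, for any $\theta \in L^2$ (by density/duality). Next, for the integrability, I would use the same commutation together with the $L^2\!\to\!L^2$ boundedness of the $T_{ij}$:
\begin{align*}
\|u_j\|_{L^2((0,\infty)\times\RRd)} = \|T_{ij}\partial_i\theta\|_{L^2((0,\infty)\times\RRd)} \leq C\,\|\nabla\theta\|_{L^2((0,\infty)\times\RRd)},
\end{align*}
which is finite because a Leray--Hopf solution satisfies $\theta \in L^2((0,\infty);\dot{H}^1(\RRd))$ by definition. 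Hence $u\in L^2_{t,x}$ with $\ddiv u=0$.

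Having verified the two hypotheses, I would apply Lemma~\ref{lemma:L^infty} with $v=u$ to obtain, for every $t>0$,
\begin{align*}
\|\theta(t,\cdot)\|_{L^\infty(\RRd)} \leq \frac{C\,\|\theta_0\|_{L^2(\RRd)}}{t^{d/4}},
\end{align*}
for a dimensional constant $C$. In particular, for any $t_0>0$ this gives $\theta\in L^\infty([t_0,\infty);L^\infty(\RRd))$, which is the claim.

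There is no serious obstacle here: the entire nonlinear boundedness statement is extracted from the linear result, and the only substantive checks are the commutation of $T_{ij}$ with $\partial_i$ and the $L^2$-boundedness of $T_{ij}$, which are standard Calder\'on-Zygmund facts. The mild point to be careful about is that Lemma~\ref{lemma:L^infty} treats $u$ as \emph{given}, and does not require any compatibility between the drift and the solution; the nonlinearity of \eqref{eq:N1}--\eqref{eq:N4} plays no role in this step, and is only exploited in the subsequent H\"older and higher-regularity bootstrap.
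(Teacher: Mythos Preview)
Your proposal is correct and matches the paper's own proof, which simply notes that the argument is identical to Lemma~\ref{lemma:L^infty} once one observes that $\ddiv u=0$ and $u\in L^2_{t,x}$. You have spelled out the verification of these two hypotheses in slightly more detail than the paper does, but the route is the same.
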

\begin{proof}
  The proof of this lemma is the same as the proof of Lemma~\ref{lemma:L^infty} (cf.~\cite{CaffVass,ConstWu09}), and only uses the fact that $\ddiv u=0$, where $u\in \LP{2}((0,\infty)\times\RRd)$.
\end{proof}
Since $\theta \in \LP{\infty}$, it follows from the Calder\'on-Zygmund theory of singular integrals that $T_{ij}  \theta =: V_{ij} \in L^\infty([t_0,\infty);BMO(\RRd))$, for any $t_0>0$, where $i,j\in\{1,\ldots,d\}$. Therefore, we may treat \eqref{eq:N1} as a linear evolution equation (see also~\cite{CaffVass,ConstWu09}), where the divergence-free velocity field $u$ is given, and $u\in L^2((0,\infty);L^2(\RRd)) \cap L^\infty([t_0,\infty);BMO^{-1}(\RRd))$, for any $t_0>0$. This is precisely the setting of Theorem~\ref{thm:Holder:abstract} for the linear evolution equation. \red{Hence Theorem~\ref{thm:Holder:abstract} can be applied to the nonlinear problem to give H\"older regularity of the solution.} Therefore we obtain:
\begin{lemma}[\bf H\"older regularity]\label{lemma:Holder2}
A Leray-Hopf weak solution $\theta$ of \eqref{eq:N1}-\eqref{eq:N4} is H\"older smooth for positive time, i.e., for any $t_0>0$, there exists $\alpha>0$ such that $\theta \in C^\alpha([t_0,\infty)\times \RRd)$.
\end{lemma}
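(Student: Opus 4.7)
The plan is to reduce to the linear setting and apply Theorem~\ref{thm:Holder:abstract} with the roles $v = u$ and $V_{ij} = T_{ij}\theta$. To do this I need only verify, for an arbitrary fixed $t_0 > 0$, the three hypotheses of that theorem: that $u$ is divergence free, that $v_j = \partial_i V_{ij}$ with $V_{ij}$ enjoying the baseline integrability \eqref{eq:V:integrability}, and that $V_{ij} \in L^\infty([t_0,\infty); BMO(\RRd))$.

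First I would check the structural conditions. The identity $u_j = \partial_i T_{ij}\theta$ gives exactly the representation $v_j = \partial_i V_{ij}$ required in \eqref{eq:u:Vdef}. The divergence-free condition $\partial_j u_j = \partial_j \partial_i T_{ij}\theta = 0$ follows directly from the hypothesis $\partial_i\partial_j T_{ij} \equiv 0$ on the matrix $\{T_{ij}\}$. For the hypothesis $v \in \LP{2}$, I would use that $T_{ij}$ is a Calder\'on--Zygmund operator bounded on $L^2$, which commutes with derivatives on Schwartz functions, so that
\begin{align*}
  \Vert u \Vert_{\LP{2}}^2 = \sum_j \int_0^\infty \Vert \partial_i T_{ij} \theta(t,\cdot)\Vert_{L^2(\RRd)}^2 dt \leq C \int_0^\infty \Vert \nabla \theta(t,\cdot)\Vert_{L^2(\RRd)}^2 dt < \infty,
\end{align*}
using that $\theta$ is a Leray-Hopf solution, hence $\theta \in L^2((0,\infty);\dot H^1(\RRd))$. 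An identical computation based on $L^2 \to L^2$ boundedness of $T_{ij}$ gives $V_{ij} = T_{ij}\theta \in L^\infty([0,\infty); L^2(\RRd)) \cap L^2((0,\infty); \dot H^1(\RRd))$, which is exactly \eqref{eq:V:integrability}.

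Next I would produce the $BMO$ bound on $V_{ij}$, which is the one place where the nonlinearity really enters. By Lemma~\ref{lemma:bounded2}, for the given $t_0 > 0$ we have $\theta \in L^\infty([t_0,\infty); L^\infty(\RRd))$. Since each $T_{ij}$ is, by assumption, a Calder\'on--Zygmund operator bounded $L^\infty \to BMO$, it follows that
\begin{align*}
  \Vert V_{ij}(t,\cdot)\Vert_{BMO(\RRd)} = \Vert T_{ij}\theta(t,\cdot)\Vert_{BMO(\RRd)} \leq C \Vert \theta(t,\cdot)\Vert_{L^\infty(\RRd)},
\end{align*}
uniformly for $t \geq t_0$, so $V_{ij} \in L^\infty([t_0,\infty); BMO(\RRd))$.

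With all the hypotheses verified, I would then invoke Theorem~\ref{thm:Holder:abstract} directly to conclude that there exists $\alpha \in (0,1)$ such that $\theta \in C^\alpha([t_0,\infty)\times\RRd)$. Since $t_0 > 0$ was arbitrary, the lemma follows. The conceptual heart of the argument — handling the $BMO^{-1}$-singularity of the drift via the modified local energy inequalities and De Giorgi iteration — has already been carried out in Section~\ref{sec:abstract}; here the only actual work is the bookkeeping check that the Calder\'on--Zygmund theory upgrades the $L^\infty$ bound of Lemma~\ref{lemma:bounded2} into the required $BMO$ bound on the potential, which closes the loop.
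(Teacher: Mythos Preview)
Your proposal is correct and matches the paper's approach exactly: freeze $u$ as a given drift, use Lemma~\ref{lemma:bounded2} together with the $L^\infty\to BMO$ boundedness of the Calder\'on--Zygmund operators $T_{ij}$ to place $V_{ij}=T_{ij}\theta$ in $L^\infty_t BMO_x$, verify the remaining integrability hypotheses from the Leray--Hopf energy bounds, and invoke Theorem~\ref{thm:Holder:abstract}. The paper gives precisely this argument in the paragraph preceding the lemma, only more tersely; your write-up simply spells out the hypothesis checks in more detail.
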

Lastly, since the H\"older regularity is sub-critical for the natural scaling of \eqref{eq:N1}-\eqref{eq:N4} one may bootstrap to prove that the solution is in a higher H\"older class:
\begin{lemma}[\bf Higher regularity]\label{lemma:C^infty}
Let $\theta \in L^\infty([t_0,\infty);C^\alpha(\RRd))$ be a Leray-Hopf weak solution of the initial value problem associated to \eqref{eq:theta:1}--\eqref{eq:u:Vdef}, \chd{with $\alpha\in(0,1)$}.
Then $\theta \in L^\infty([t_1,\infty);\chd{C^{1+\delta}}(\RRd))$, for any $t_1 > t_0$, for some \chd{$\delta \in(0,1)$}.
\end{lemma}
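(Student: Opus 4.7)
The plan is to bootstrap the subcritical H\"older regularity from Lemma~\ref{lemma:Holder2} up to $C^{1+\delta}$ via a finite sequence of parabolic Schauder-type estimates. The key structural inputs are the boundedness of the Calder\'on-Zygmund operators $T_{ij}$ on non-integer H\"older spaces, and the divergence-free condition $\ddiv u=0$.

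First I would upgrade the drift: since each $T_{ij}$ is Calder\'on-Zygmund and $\alpha\in(0,1)$, we have $V_{ij}=T_{ij}\theta\in L^\infty([t_0,\infty);C^\alpha(\RRd))$, so the drift $u_j=\partial_i V_{ij}$ lies in the strictly smaller space $L^\infty([t_0,\infty);\dot{C}^{\alpha-1}(\RRd))$, the homogeneous Zygmund-Besov space of negative order $\alpha-1$. I would then view \eqref{eq:N1} as a linear parabolic equation for $\theta$ with this drift, rewritten in divergence form via
\[
u\cdot\nabla\theta \;=\; \partial_j(u_j\theta) \;=\; \partial_i\partial_j(V_{ij}\theta) \;-\; \partial_j(V_{ij}\,\partial_i\theta),
\]
so that \eqref{eq:N1} becomes
\[
\partial_t\theta - \Delta\theta \;=\; -\partial_i\partial_j(V_{ij}\theta) + \partial_j(V_{ij}\,\partial_i\theta).
\]
Parabolic Schauder (Morrey-Campanato) estimates (cf.~\cite{Lieberman}) applied to this right-hand side, using that $V_{ij}\theta\in C^\alpha$ (Banach-algebra property of $C^\alpha$) and that $V_{ij}\,\partial_i\theta$ is well-defined via Bony's paraproduct as soon as $\alpha+(\alpha-1)>0$, yield a definite regularity gain: $\theta\in L^\infty([t_0',\infty);C^{\alpha'}(\RRd))$ for some $\alpha'=\alpha'(\alpha)>\alpha$ and any $t_0'>t_0$.

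Iterating the two steps above a finite number of times pushes the H\"older exponent of $\theta$ past $1$; at that point the drift $u$ itself lies in a positive H\"older class $C^\beta$ with $\beta>0$, and the classical parabolic Schauder theory for equations with H\"older drifts (cf.~\cite{Lieberman}) produces $\theta\in L^\infty([t_1,\infty);C^{1+\delta}(\RRd))$ for some $\delta\in(0,1)$ and any $t_1>t_0$, as claimed.

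The main obstacle is executing the very first Schauder step when $\alpha\le 1/2$, since Bony's paraproduct for $V_{ij}\,\partial_i\theta$ requires $\alpha+(\alpha-1)>0$. In this borderline regime I would work directly from the Duhamel representation
\[
\theta(t) \;=\; e^{(t-t_0)\Delta}\theta(t_0) \;-\; \int_{t_0}^{t} e^{(t-s)\Delta}\,\partial_j(u_j\theta)(s)\,ds,
\]
integrating by parts to transfer derivatives from the inhomogeneity onto the smoothing heat kernel, and using $\partial_j u_j=0$ to avoid a spurious loss; this yields a small but definite gain of H\"older regularity per pass, sufficient to reach $\alpha>1/2$ after finitely many borderline iterations, after which the cleaner paraproduct argument above takes over and the bootstrap closes in a controlled number of steps.
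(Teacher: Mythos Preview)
Your proposal has a genuine gap: the Schauder step does not yield the claimed gain $\alpha'>\alpha$. Count derivatives: with $\theta\in C^\alpha$ you have $V_{ij}\theta\in C^\alpha$, so $\partial_i\partial_j(V_{ij}\theta)\in C^{\alpha-2}$; and when $\alpha>1/2$ the paraproduct gives $V_{ij}\,\partial_i\theta\in C^{\alpha-1}$, so $\partial_j(V_{ij}\,\partial_i\theta)\in C^{\alpha-2}$ as well. Parabolic Schauder (or equivalently Duhamel with heat smoothing) applied to a right-hand side in $C^{\alpha-2}$ returns only $\theta\in C^\alpha$---no improvement. The same obstruction kills your Duhamel argument for $\alpha\le 1/2$: writing $u\cdot\nabla\theta=\partial_j(u_j\theta)$ and putting the derivative on the kernel gives an integrand of size $(t-s)^{-(\beta-\alpha+2)/2}$, which is integrable only for $\beta<\alpha$. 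The divergence-free condition, used merely to pass to $\partial_j(u_j\theta)$, buys you nothing extra at this level: the equation is critical in the derivative count, and a naive linear estimate cannot break even.

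The paper obtains the gain by a different mechanism. For $\alpha>1/2$ it follows \cite{ConstIyerWu,ConstWu08}: one takes the $L^p$ energy estimate on each Littlewood--Paley block $\Delta_j\theta$, and the crucial point is that the term $\int |\Delta_j\theta|^{p-2}\Delta_j\theta\,\Delta_j(u\cdot\nabla\theta)$ is bounded using a \emph{commutator estimate} (the piece $\int |\Delta_j\theta|^{p-2}\Delta_j\theta\,(u\cdot\nabla)\Delta_j\theta$ vanishes by $\ddiv u=0$, and $[\Delta_j,u\cdot\nabla]\theta$ has improved decay). This yields the bound \eqref{eq:CIW3}, hence a jump from $\dot B^{\alpha_p}_{p,\infty}$ to $\dot B^{2\alpha_p}_{p,\infty}$, which after embedding gives $C^{2\alpha-\varepsilon}$. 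For $\alpha\le 1/2$ the paper does not try to bootstrap in $C^\alpha$ at all; instead it uses the extra Leray--Hopf information $\theta\in L^2_t\dot H^1_x$ together with Chemin--Lerner space-time Besov estimates to reach $\nabla\theta\in L^2_t L^\infty_x$, and then closes by standard $H^m$ energy estimates. Your outline invokes neither the commutator structure nor the $L^2_t\dot H^1_x$ input, and without one of them the bootstrap cannot start.
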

\chd{For $1/2< \alpha<1$, the} proof is the same as the proof of higher regularity for the modified surface quasi-geostrophic equation~\cite[Theorem 2.2]{ConstIyerWu} (see also~\cite[Theorem 3.1]{ConstWu08} for the supercritical quasi-geostrophic equation). These elegant proofs use the natural characterization of H\"older spaces in terms of Besov spaces, and energy inequalities at the level of frequency shells.

\chd{For $0<\alpha\leq  1/2$, the $C^\alpha$ smoothness of $\theta$ is weak relative to the roughness of the velocity $u$, and it is therefore necessary to modify the techniques of \cite{ConstWu08,ConstIyerWu} for the proof of higher regularity. In \cite{FV-higher} we give the details of this modification which uses the extra information that $u\in L_{t,x}^2$ and employs estimates in the Chemin-Lerner (cf.~\cite{CheminLerner}) space-time Besov spaces.}

\chd{We give a very brief outline of the proof of lemma 3.4
in the two ranges for alpha and refer the reader to \cite{ConstWu08,ConstIyerWu} and
\cite{FV-higher} for detailed estimates.}

\begin{proof}[Proof of Lemma~\ref{lemma:C^infty}]
    \chd{Let $\dot{B}_{p,q}^s$ be the classical homogenous Besov space (cf.~\cite{ConstIyerWu,ConstWu08}), and recall that $L^\infty \cap \BB{s}{\infty} = C^s$ is the H\"older space with index $s$.} The proof of the lemma \chd{in the case $\alpha \in (1/2,1)$} is based on first noting that if $\theta$ is as in the statement of the lemma, then $\theta \in L^{\infty}([t_0,\infty); \BB{\alpha_p}{p})$, where $\alpha_p = (1-2/p)\alpha$, and $p\in[2,\infty)$ is fixed, to be chosen later. Then, for $j\in {\mathbb Z}$ fixed, we have
  \begin{align}\label{eq:CIW1}
    \frac{1}{p}\frac{d}{dt} \Vert \Delta_j \theta \Vert_{L^p}^p +\int |\Delta_j \theta|^{p-2} \Delta_j \theta (-\Delta) \Delta_j \theta  = - \int |\Delta_j \theta|^{p-2} \Delta_j \theta \Delta_j (u\cdot \nabla \theta).
  \end{align}Upon integration by parts (see also~\cite{CannonePlanchon}), the dissipative term is bounded from below
  \begin{align}\label{eq:CIW2}
     \int |\Delta_j \theta|^{p-2} \Delta_j \theta (-\Delta) \Delta_j \theta\; dx \geq \frac{2^{2j}}{C(d,p)} \Vert \Delta_j \theta \Vert_{L^p}^p,
  \end{align}where $C(d,p)>0$ is a constant depending on the dimension and $p$. The main difficulty lies in estimating the convection term. This is achieved in \cite{ConstIyerWu,ConstWu08} by using the Bony paraproduct formula, the H\"older inequality, the Bernstein inequalities, a commutator estimate, and the fact that $\Vert u \Vert_{C^{\alpha_p-1}}\leq C \Vert \theta \Vert_{C^{\alpha_p}}$. The latter holds since $u_j = \partial_i T_{ij} \theta$ and the fact that Calder\'on-Zygmund operators are bounded on H\"older spaces. If $\alpha_p <2$ these \chd{operations give}
  \begin{align}\label{eq:CIW3}
    \left| \int |\Delta_j \theta|^{p-2} \Delta_j \theta \Delta_j (u\cdot \nabla \theta)\; dx \right| \leq C 2^{(2-2\alpha_p )j} \Vert \theta \Vert_{C^{\alpha_p}} \Vert \theta \Vert_{\BB{\alpha_p}{p}}.
  \end{align}
\chd{Combining \eqref{eq:CIW1}--\eqref{eq:CIW3}, using the Gr\"onwall inequality, and then taking the supremum in $j$ gives that $\theta \in L^{\infty}([t_1,\infty); \BB{2\alpha_p}{p}(\RRd))$ for any $t_1>t_0$. Using the Besov embedding theorem we obtain that $\theta \in L^\infty([t_1,\infty);\BB{2\alpha - \epsilon_p}{\infty}(\RRd))$, for any $t_1>t_0$, where $\epsilon_p = (4\alpha+d)/p < (4+d)/p$. Letting $p> (4+d)/(2\alpha-1)$ concludes the proof of the lemma in the case $\alpha\in(1/2,1)$.}

\chd{In the case $\alpha \in(0,1/2]$ the proof is based on proving that the additional information $\theta \in L^2([t_1,t_2];\dot{H}^1)$, implies $\theta \in L^2([t_1,t_2]; \dot{B}^{1+d/p}_{p,1})$ for some large enough $p>2$, and for any $t_2 > t_1$. This is achieved by using the smoothing effect of the Laplacian on high frequencies of $\theta$, so that we need to work in the space-time Besov spaces introduced by Chemin and Lerner (cf.~\cite{CheminLerner}). By the endpoint Sobolev embedding theorem we thus obtain that $\nabla \theta \in L^2([t_1,t_2];B^{0}_{\infty,1}) \subset L^2([t_1,t_2];L^\infty)$. From here, standard energy estimates imply that $\theta\in L^\infty([t_1',t_2];\dot{H}^m)$ for all $m\geq 2$, and $t_1'\in[t_1,t_2]$, concluding the proof of the lemma after applying the Sobolev embedding $H^m \subset C^{1,\beta}$ with $m>1+d/2$. We refer to~\cite{FV-higher} for details.}

\end{proof}

\begin{proof}[Proof of Theorem~\ref{thm:nonlinear}]
The existence of a global in time Leray-Hopf weak solution of \eqref{eq:N1}--\eqref{eq:N4}, evolving from $\theta_0 \in L^2$, is proven in Appendix~\ref{appendix}. The argument is to construct solutions to an approximate system, and then to pass to the limit in the weak formulation of the problem, using the Aubin-Lions compactness lemma (cf.~\cite{LionsJL69}).

The proof of Theorem~\ref{thm:nonlinear} now follows from Lemmas~\ref{lemma:bounded2}, \ref{lemma:Holder2}, \ref{lemma:C^infty}. For any $\beta \in (0,1)$, after finitely many applications of Lemma~\ref{lemma:C^infty} the solution is shown to be in $L^\infty([t_0,\infty); C^{1+\beta}(\RRd))$, for any $t_0>0$, and is hence a classical solution. Higher regularity is standard.\end{proof}

\section{Global regularity of the MG system}\label{sec:MG}
There is a vast literature studying mathematical models for the Earth's dynamo (see, for example Glatzmaier, Ogden, and Clune~\cite{Glatzmaier} and references therein). However, at present, no computational dynamo model can encompass the fine scale resolution required to simulate the turbulent processes believed to exist in the Earth's core. It is therefore reasonable to examine models that are simpler than the full system of PDE governing rotating, convective, magneto-hydrodynamic flows, but that retain some of the essential features relevant to the physics of the Earth's core. One such model for magnetostrophic turbulence was recently proposed by Moffatt~\cite{Moffatt}. He postulates that the magnetic field $B(t,x)$ in the core consists of a mean part $B_0$, which results from dynamo action and can be considered as locally uniform and steady, and a perturbation field $b(t,x)$ induced by the flow $u(t,x)$ across $B_0$.

It is assumed that the scale $L$ of convective turbulence lies in the range $V/\Omega \ll L \ll \eta/V$, where $V$ is the average magnitude of the upward buoyant velocity, $\Omega$ is the angular velocity of the Earth, and $\eta$ is the magnetic diffusivity of the fluid medium. This assumption implies that the Rossby number $V/\Omega L$ and the magnetic Reynolds number $VL/\eta$ are both small. The turbulent Reynolds number in the core is expected to be very large. The dominant terms in the \red{three dimensional} equations of motion and the induction equation give the following {\it linear} system
\begin{align}
  &2 \Omega e_3 \times u = -\nabla P + (B_0 \cdot \nabla) b - \theta g \label{eq:MG:2.1}\\
  & 0 = (B_0 \cdot \nabla) u + \eta \Delta b \label{eq:MG:2.1'}\\
  & \ddiv u = 0 \label{eq:MG:2.3}\\
  & \ddiv b = 0 \label{eq:MG:2.4},
\end{align}where $P(t,x)$ is the sum of the fluid and magnetic pressures, $\theta(t,x)$ is the buoyancy field (e.g. perturbation of the temperature), and $g$ is the gravitational acceleration. \red{We use Cartesian coordinates in the reference frame rotating about the axis $e_3=(0,0,1)$.}

Equations \eqref{eq:MG:2.1}-\eqref{eq:MG:2.4} establish a linear relation between the variables $u(t,x)$, $b(t,x)$, and $\theta(t,x)$. The sole remaining nonlinearity from the full convective MHD system occurs in the advection-diffusion equation for the buoyancy $\theta(t,x)$:
\begin{align}\label{eq:MG:2.5}
\partial_t \theta + (u \cdot \nabla) \theta = S+ \kappa \Delta \theta,
\end{align}where $S$ is a source term. The diffusivity $\kappa$ in the core is very small, hence the nonlinear advection term is dominant and cannot be neglected.

The system \eqref{eq:MG:2.1}-\eqref{eq:MG:2.5} gives an active scalar model for magneto-geostrophic dynamics, which we call the {\it \MG equations}. As Moffatt observes, \eqref{eq:MG:2.1}-\eqref{eq:MG:2.5} has some similarities with the dissipative Burgers equation, but it has a clearer physical basis and the velocity $u(t,x)$ is three-dimensional. We remark that the system has closer similarities to the surface quasi-geostrophic equation (SQG), which is also derived in the context of a rapidly rotating system dominated by Coriolis' force. However, the operator that connects $u$ and $\theta$ via \eqref{eq:MG:2.1}-\eqref{eq:MG:2.4} has features that are distinct from the analogous operator in the SQG system as we shall now discuss.

For simplicity we will examine \eqref{eq:MG:2.1}-\eqref{eq:MG:2.4} in the case where $B_0$ is a vector that is constant in magnitude and direction in the plane perpendicular to $e_3$. We write
\begin{align*}
  B_0  = \beta e_2.
\end{align*}We assume that gravity acts parallel to the axis of rotation, i.e. $g = e_3$. With these assumptions we are examining a local tangent plane model for the Earth's fluid core that ignores the sphericity, but retains the essence of the mathematical structure of the active scalar equation \eqref{eq:MG:2.5}, with $u$ constructed from $\theta$ via \eqref{eq:MG:2.1}-\eqref{eq:MG:2.4}. Manipulation of the linear system \eqref{eq:MG:2.1}-\eqref{eq:MG:2.4} gives, in component form,
\begin{align}
  & u_1 = D^{-1} \left( - 2\Omega \partial_2 P - \Gamma \partial_1 P\right) \label{eq:MG:2.6}\\
  & u_2 = D^{-1} \left( 2\Omega \partial_1 P - \Gamma \partial_2 P\right) \label{eq:MG:2.7}\\
  & \partial_3 u_3 = D^{-1} \Gamma \Delta_H P \label{eq:MG:2.8}\\
  & \partial_3 \theta = \left( \Gamma^2 \Delta_H D^{-1} + \partial_{33}\right) P \label{eq:MG:2.9},
\end{align}where the operators $\Gamma,D$, and $\Delta_H$ are defined as
\begin{align}
  &\Gamma = - \frac{\chd{\beta^2}}{\eta} (-\Delta)^{-1} \partial_{22}\label{eq:MG:2.10}\\
  &D = 4 \Omega^2 + \Gamma^2 \label{eq:MG:2.11},\\
  &\Delta_H = \partial_{11} + \partial_{22},
\end{align}where $x = (x_1,x_2,x_3) \in \xdomain$. We note that a more general choice of the mean, steady, locally uniform magnetic field $B_0$ or of the gravitational vector $g$ results in the same structure of the leading order terms. It is the \chd{anisotropy} that is produced by $B_0$ that is a distinctive and crucial feature of the \MG system.

The operator $D$ given by \eqref{eq:MG:2.11} is invertible since its Fourier symbol does not vanish on $\Fdomain$, justifying the use of $D^{-1}$. In order to uniquely determine $u_3$ and $\theta$ from \eqref{eq:MG:2.8} and \eqref{eq:MG:2.9}, we restrict the system to the function spaces where $\theta$ and $u_3$ are periodic in the $x_3$-variable, with zero vertical mean, i.e.~$\int_{0}^{2\pi} \theta\; dx_3= \int_{0}^{2\pi} u_3 \; dx_3 =0$. In fact, without such a restriction the system is not well defined. We integrate \eqref{eq:MG:2.9} and use the zero-mean assumption to obtain
\begin{align}\label{eq:MG:2.12}
\theta = A[P].
\end{align}where $A$ is formally defined as the Fourier multiplier with symbol
\begin{align}
  \hat{A}(\kkx,\kky,\kkz) = \frac{4\Omega^2 \kkz^{2} |\KK|^2 + (\chd{\beta^2}/\eta)^2 \kky^{4}}{i \kkz ( 4 \Omega^2 |\KK|^4 + (\chd{\beta^2}/\eta)^2 \kky^{4})} \label{eq:MG:2.13}
\end{align}
for all $\kkz \neq 0$ (by our vertical mean-free assumption), where $\KK = (\kkx,\kky,\kkz)\in \Fdomain$. Therefore $A$ is invertible on the space of functions with null $x_3$-average. Note that $\partial_3 A[P] = (\Gamma^2 \Delta_H D^{-1} + \partial_{33}) P$ in the physical space. We now use \eqref{eq:MG:2.6}-\eqref{eq:MG:2.8} to represent $u_1,u_2$, and $u_3$ in terms of $\theta$:
\begin{align}
  &u_1 = D^{-1} ( - 2 \Omega \partial_2 - \Gamma \partial_1) (A^{-1}[\theta]) \equiv M_1[\theta] \label{eq:MG:2.14}\\
  &u_2 = D^{-1}(2 \Omega \chd{\partial_1} - \Gamma \partial_2) ( A^{-1}[\theta]) \equiv M_2[\theta] \label{eq:MG:2.15}\\
  &u_3 = (D^{-1} \Gamma \Delta_H) (D^{-1}\Gamma \Delta_H + \partial_{33})^{-1}[\theta] \equiv M_3[\theta] \label{eq:MG:2.16}.
\end{align}To investigate the properties of the operator $M=(M_1,M_2,M_3)$, we note that it is a vector of Fourier multipliers, with explicit Fourier symbols given by
\begin{align}
  & \hat{M}_1(\KK) = \frac{2\Omega \kky \kkz |\KK|^2 - (\chd{\beta^2}/\eta) \kkx \kky^{2} \kkz}{4 \Omega^2 \kkz^{2} |\KK|^2 + (\chd{\beta^2}/\eta)^2 \kky^{4}}\label{eq:MG:2.17}\\
  & \hat{M}_2(\KK) = \frac{-2\Omega \kkx \kkz |\KK|^2 - (\chd{\beta^2}/\eta) \kky^{3}\kkz}{4 \Omega^2 \kkz^{2} |\KK|^2 + (\chd{\beta^2}/\eta)^2 \kky^{4}}\label{eq:MG:2.18}\\
  & \hat{M}_3(\KK) = \frac{(\chd{\beta^2}/\eta) \kky^{2}(\kkx^{2} + \kky^{2})}{4 \Omega^2 \kkz^{2} |\KK|^2 + (\chd{\beta^2}/\eta)^2 \kky^{4}}\label{eq:MG:2.19}
\end{align}for all $k_3 \neq 0$. Since by assumption $\hat{\theta}(\kkx,\kky,0) = \hat{u}(\kkx,\kky,0)= 0$, in order to have a uniquely defined symbol $\hat{M}(k)$ on all of $\Fdomain$, without loss of generality we may let $\hat{M}_1(\kkx,\kky,0) = \hat{M}_2(\kkx,\kky,0)=0$, and $\hat{M}_3(\kkx,\kky,0) = \hat{M}_3(\kkx,\kky,1)$. Note that $u_j = M_j[\theta]$ is defined via the inverse Fourier transform from
\begin{align}
  &\hat{u}_j(\KK) = \hat{M}_{j}(\KK) \hat{\theta}(\KK),\ \mbox{for all}\ \KK\in \Fdomain, \label{eq:udef1}
\end{align}for all $j\in \{1,2,3\}$. Also, since $\ddiv u = 0$, we have that $\KK\cdot \hat{M}(\KK) = 0$.

When the frequency vector $\KK=(\kkx,\kky,\kkz)$ has components such that $\kkx \leq \max\{\kky,\kkz\}$, then the symbols $\hat{M}_j$ are bounded for all $j\in \{1,2,3\}$. However this is not the case for \chd{``curved''} regions of frequency space where $\kkz = O(1)$, $\kky = O(|\kkx|^{\chd{\sigma}})$, \chd{where $0\leq \sigma \leq 1/2$,} and $|\kkx|\gg 1$. In such regions the symbols \eqref{eq:MG:2.17}--\eqref{eq:MG:2.19} are unbounded, since as $|\kkx| \rightarrow \infty $ we have
\begin{align*}
  |\hat{M}_1(\kkx,\chd{|\kkx|^{\sigma}},1)| \approx \chd{|\kkx|^{\sigma}},\ |\hat{M}_2(\kkx,\chd{|\kkx|^{\sigma}},1)| \approx |\kkx|,\ | \hat{M}_3(\kkx,\chd{|\kkx|^{\sigma}},1)| \approx \chd{|\kkx|^{2 \sigma}},
\end{align*}\chd{where $\sigma \in (0,1/2]$,} and we write $a \approx b$ if there exists a constant $C>0$ such that $a/C \leq b\ \leq C a$. \red{It follows from \eqref{eq:MG:2.17}--\eqref{eq:MG:2.19} that}
\begin{align}
  |\hat{M}_j(\KK)| \leq C_* |\KK|\label{eq:MG:universalbound}
\end{align}for all $\KK \in \Fdomain$, and all $j\in \{1,2,3\}$, where $C_*=C_*(\beta,\eta,\Omega)>0$ is a fixed constant. From the previous remark it is clear that along certain \chd{curves} in frequency space the bound \eqref{eq:MG:universalbound} is sharp.

We now prove that the active scalar equation (cf.~\eqref{eq:MG:2.1}-\eqref{eq:MG:2.5} with $S=0$)
\begin{align}
  & \partial_t \theta + (u\cdot\nabla)\theta = \kappa \Delta\theta\label{eq:MG:2.20}\\
  & \ddiv u =0\\
  & u = M[\theta]
\end{align}with $M$ given by \eqref{eq:MG:2.14}-\eqref{eq:MG:2.16}, or equivalently by its Fourier symbol \eqref{eq:MG:2.17}-\eqref{eq:MG:2.19}, satisfies the conditions of the abstract problem studied in Section~\ref{sec:nonlinear}. First note that we can write
\begin{align}
  u_j = M_j[\theta] = \partial_i T_{ij}[\theta] = \partial_i V_{ij},
\end{align}where we have denoted
\begin{align}
T_{ij} = - \partial_i (-\Delta)^{-1} M_j.\label{eq:MG:2.27}
\end{align} By \eqref{eq:MG:universalbound} we have that $|\hat{T}_{ij}(\KK)|\leq C_*$ for all $\KK\in\Fdomain$, and \red{hence} it follows directly from Plancherel's theorem that $T_{ij} \colon L^2(\xdomain) \mapsto L^2(\xdomain)$ is a bounded map.

It remains to prove that $T_{ij} \colon L^\infty(\xdomain) \mapsto BMO(\xdomain)$ boundedly. This reduces to proving that $N_j = (-\Delta)^{-1/2} M_j\colon L^\infty \mapsto BMO$ is a bounded map, since Riesz-transforms are bounded on $BMO$. The later holds because $N_j$ is a pseudo-differential operator of order $0$ (cf.~\cite{McLean,Ruzhansky,Stein93}). The main idea is that one may extend $\hat{N}_j$ from $\Fdomain$ to a symbol $\hat{N}_j'$ defined on ${\mathbb R}^3$ such that they agree on $\Fdomain$, and such that $\hat{N}_j'$ is the symbol of a classical H\"ormander-class pseudo-differential operator of order $0$ (cf.~Stein~\cite{Stein93}). More precisely, let $\hat{N}_j'(\KK)=\hat{M}_j(\KK)/ |\KK|$ for all $\KK\in {\mathbb R}^3$ with $|\kkz|\geq 1$, while for $|\kkz|<1$, replace the denominator $4 \Omega^2 \kkz^{2} |\KK|^2 + (\chd{\beta^2}/\eta)^2 \kky^{4}$ by the quantity $4 \Omega^2 \varphi(\kkz)^2(\kkx^{2} + \kky^{2}+\varphi(\kkz)^2) + (\chd{\beta^2}/\eta)^2 \kky^{4}$, in the definitions \eqref{eq:MG:2.17}-\eqref{eq:MG:2.19} of $\hat{M}_j(\KK)$. Here $\varphi(\cdot)$ is $C^\infty$ smooth \blu{monotone increasing} function that coincides with the identity on $|\kkz|\geq 1/2$, and is constantly equal to $1/2$ on $|\kkz| \leq 1/4$. This construction ensures the smoothness \blu{of the symbol} near the origin, while the bound $|\partial_k^\alpha \hat{N}_j'(\KK)|\leq C_\alpha (1+|\KK|)^{-|\alpha|}$ follows by inspection. To close the argument, note that the operators $N_j$ and $N_j'$ differ by a compact operator in the symbol class $S^{-\infty}$(cf.~\cite{McLean,Ruzhansky} and references therein). This concludes the proof of the boundedness of $T_{ij} \colon L^\infty \mapsto BMO$.

\red{The abstract Theorem~\ref{thm:nonlinear} may therefore be applied to the MG equations in order to obtain the global smoothness of weak solutions, and hence we have proven:}
\begin{theorem}[\bf The MG system]\label{thm:nonlinearMG}
Let $\theta_0\in L^2(\RRd)$ be given. There exists a $C^\infty$ smooth classical solution $\theta(t,x)$, of \eqref{eq:MG:2.20}--\eqref{eq:MG:2.27}, evolving from $\theta_0$.
\end{theorem}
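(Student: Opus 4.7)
The plan is to reduce Theorem~\ref{thm:nonlinearMG} to a direct application of the abstract Theorem~\ref{thm:nonlinear}. Almost all the computational work has already been set up in the derivation preceding the theorem statement; what remains is to assemble the ingredients into a verification of the three structural hypotheses of Theorem~\ref{thm:nonlinear} for the MG velocity operator $M$, and then to invoke the weak existence theory provided by the Appendix.

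I would proceed in three short steps for the structural check. First, setting $T_{ij} := -\partial_i (-\Delta)^{-1} M_j$ as in \eqref{eq:MG:2.27}, one has $u_j = M_j[\theta] = \partial_i T_{ij}[\theta]$ by construction, and the algebraic constraint $\partial_i \partial_j T_{ij} \equiv 0$ reduces to $\partial_j u_j = 0$, equivalently to $\KK \cdot \hat{M}(\KK) = 0$ as noted after \eqref{eq:udef1}. Second, the universal pointwise Fourier bound $|\hat{M}_j(\KK)| \leq C_* |\KK|$ from \eqref{eq:MG:universalbound} gives $|\hat{T}_{ij}(\KK)| \leq C_*$ on $\Fdomain$, and hence Plancherel yields $T_{ij}\colon L^2 \to L^2$ boundedly. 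Third, and this is the sole nontrivial analytic step, one must show that $T_{ij}\colon L^\infty \to BMO$ is bounded.

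Since Riesz transforms preserve $BMO$, the $L^\infty \to BMO$ bound reduces to the corresponding statement for $N_j := (-\Delta)^{-1/2} M_j$. The main obstacle is that the natural extension of $\hat{N}_j$ to ${\mathbb R}^3$ is singular along the plane $\kkz = 0$, where the denominator $4\Omega^2 \kkz^2 |\KK|^2 + (\beta^2/\eta)^2 \kky^4$ can degenerate. My plan is to build an auxiliary symbol $\hat{N}_j'$ on ${\mathbb R}^3$ that coincides with $\hat{N}_j$ for $|\kkz|\geq 1$ and, for $|\kkz|<1$, is regularized by substituting $\varphi(\kkz)$ for $\kkz$ in the denominator, where $\varphi\in C^\infty$ is a monotone function equal to the identity on $|\kkz|\geq 1/2$ and to $1/2$ on $|\kkz|\leq 1/4$. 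A direct inspection of the rational expressions \eqref{eq:MG:2.17}--\eqref{eq:MG:2.19} then yields the H\"ormander symbol estimates $|\partial_\KK^\alpha \hat{N}_j'(\KK)| \leq C_\alpha (1+|\KK|)^{-|\alpha|}$, so that $N_j'$ is a classical pseudodifferential operator of order zero on ${\mathbb R}^3$ and hence maps $L^\infty \to BMO$. Finally, the difference $N_j - N_j'$ is a Fourier multiplier supported in $|\kkz|<1$; on the function class under consideration (periodic in $x_3$ with zero vertical mean, so effectively $\kkz \in {\mathbb Z}\setminus\{0\}$) this difference contributes only through a bounded set of modes, lies in the smoothing class $S^{-\infty}$, and is therefore trivially bounded from $L^\infty$ into $BMO$.

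With the three hypotheses in hand, Theorem~\ref{thm:nonlinear} applies to any Leray-Hopf weak solution of the MG system. The existence of such a weak solution evolving from $\theta_0 \in L^2$ is provided by the Appendix via a Galerkin approximation together with the Aubin--Lions compactness lemma. Combining these two ingredients yields the desired $C^\infty$ classical solution, completing the proof of Theorem~\ref{thm:nonlinearMG}.
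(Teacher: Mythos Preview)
Your proposal is correct and follows essentially the same route as the paper: the verification that the MG operators $T_{ij}=-\partial_i(-\Delta)^{-1}M_j$ satisfy the hypotheses of Theorem~\ref{thm:nonlinear} via the divergence-free identity $\KK\cdot\hat M(\KK)=0$, the Plancherel bound from \eqref{eq:MG:universalbound}, and the $L^\infty\to BMO$ bound for $N_j=(-\Delta)^{-1/2}M_j$ obtained by extending the symbol to a H\"ormander class $S^0$ operator on ${\mathbb R}^3$ using the cutoff $\varphi(\kkz)$ in the denominator, is exactly what the paper does in the paragraphs preceding the theorem. Your final step of invoking the Appendix for weak existence and then Theorem~\ref{thm:nonlinear} for smoothness matches the paper's conclusion verbatim.
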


\appendix
\section{Existence of weak solutions to \eqref{eq:N1}--\eqref{eq:N4}}~\label{appendix}
Here we sketch the proof of existence of global Leray-Hopf weak solutions of \eqref{eq:N1}--\eqref{eq:N4} evolving from $\theta_0 \in L^2(\RRd)$. We follow the general strategy used to construct weak solutions of the Navier-Stokes equations (cf.~\cite{Temam01}). The main obstacle is the fact that $u$ is obtained from $\theta$ via a nonlocal operator of order $1$.

Denote by $(-\Delta)^{1/2} = \Lambda$ the square root of the Laplacian. Let $\phi \in C_{0}^{\infty}(\RRd)$ be positive, with $\int_{\RRd} \phi\; dx = 1$. Then $\phi_{\epsilon} = \epsilon^{-d} \phi(x/\epsilon)$, for $\epsilon>0$, is a standard family of mollifiers. We first consider the approximating system
\begin{align}
  &\partial_t \teps + (\ueps \cdot \nabla )\teps - \Delta \teps = - \epsilon \Lambda^3 \teps \label{eq:A1}\\
  & \ddiv \ueps = 0, u_j = \partial_i T_{ij} \teps \label{eq:A2}\\
  & \teps(0,\cdot) = \theta_{0}^{\epsilon}, \label{eq:A3}
\end{align}where $\theta_{0}^{\epsilon} = \phi_\epsilon \ast \theta_0$ represents the mollified initial data, and $T_{ij}$ are Calder\'on-Zygmund operators. Note that $\Vert \theta_{0}^{\epsilon} \Vert_{L^2} \leq \Vert \theta_0 \Vert_{L^2}$ for any $\epsilon >0$.

Let $s>d/2+1$ and fix $\epsilon >0$. Since $\Lambda^s \theta_{0}^{\epsilon} \in L^2(\RRd)$, and since $\epsilon\, \Lambda^3$ gives a sub-critical dissipation, from standard energy arguments it follows that
\begin{align*}
  \sup_{t\in[0,T]} \Vert \Lambda^s \teps(t) \Vert_{L^2} \leq C(\epsilon,d,\phi,T,\Vert \theta_0 \Vert_{L^2}),
\end{align*} where $C(\epsilon,d,\phi,T,\Vert \theta_0 \Vert_{L^2}) >0$ is a positive constant which is finite for any $T<\infty$. This a-priori estimate and a standard Galerkin approximation procedure ensures the global existence of a strong $H^s$ solution to \eqref{eq:A1}-\eqref{eq:A3}. Moreover, for any $\epsilon>0$ we have the uniform in $\epsilon$ energy inequality
\begin{align}
  \Vert \teps(T) \Vert_{L^2(\RRd)}^2 + 2 \int_{0}^{T} \Vert \nabla \teps(s) \Vert_{L^2(\RRd)}^2\; ds\leq \Vert \theta_0 \Vert_{L^2(\RRd)}^2, \label{eq:A4}
\end{align}for any $T>0$, and thus
\begin{align}\label{eq:A5}
 \teps \ \mbox{is bounded in}\ C([0,T];L^2(\RRd))\cap L^2(0,T;\dot{H}^1(\RRd)).
\end{align}

This guarantees that, up to a subsequence, $\teps$ converges weakly to some function $\theta \in L^\infty(0,T;L^2)\cap L^2(0,T;\dot{H}^1)$ (this convergence is weak-$\ast$ in $L^\infty(0,T;L^2)$). This does not suffice to \chd{pass} to the limit in the weak formulation of \eqref{eq:A1}-\eqref{eq:A3}. We next claim that for any compact set ${\mathcal K}\subset \RRd$ we have
\begin{align}\label{eq:A6}
\partial_t \teps \ \mbox{is bounded in}\ L^{4/3}(0,T;W^{-2,\frac{2d}{2d-1}}({\mathcal K})).
\end{align}Indeed, from \eqref{eq:A5}, the Gagliardo-Nirenberg inequality, and interpolation, it follows that $\teps$ is bounded in $L^4(0,T;L^{2d/(d-1)}(\RRd))$. Since $T_{ij}$ are bounded from $L^2(\RRd)$ into itself, by \eqref{eq:A5} it follows that $\ueps$ is bounded in $L^2(0,T;L^2(\RRd))$. Therefore, by H\"older's inequality, $\ddiv(\ueps \teps)$ is bounded in $L^{4/3}(0,T;W^{-1,2d/(2d-1)}(\RRd))$. Lastly, $\epsilon \Lambda^3 \teps$ is bounded in $L^2(0,T;{H}^{-2}(\RRd))$, and $\Delta \teps$ is a bounded family in $L^2(0,T;H^{-1}(\RRd))$. Therefore, by \eqref{eq:A1}, restricting to a compact ${\mathcal K}$, we obtain that $\partial_t \teps$ is bounded in
\begin{align*}
  L^{4/3}(0,T;W^{-1,\frac{2d}{2d-1}}({\mathcal K})) + L^2(0,T;{H}^{-2}({\mathcal K})) + L^2(0,T;H^{-1}({\mathcal K})),
\end{align*}and hence in $L^{4/3}(0,T;W^{-2,\frac{2d}{2d-1}}({\mathcal K}))$ by the Sobolev inequality, proving \eqref{eq:A6}.

Since the injection $H^1({\mathcal K})$ into $L^2({\mathcal K})$ is compact, the injection of $L^2({\mathcal K})$ into $W^{-2,2d/(2d-1)}({\mathcal K})$ is continuous, it follows from the Aubin-Lions compactness lemma \cite[Theorem 3.2.1]{Temam01} (cf.~\cite{LionsJL69}) that
\begin{align}\label{eq:A7}
  \teps \rightarrow \theta \ \mbox{strongly in}\ L^2(0,T;L_{loc}^2(\RRd))
\end{align}since ${\mathcal K}$ was arbitrary. Passing to the limit in the weak formulation of \eqref{eq:A1}-\eqref{eq:A3} is nontrivial only for the nonlinear term. For any $\varphi \in C_{0}^{\infty}( (0,\infty)\times \RRd)$, upon recalling that $u_j = \partial_i T_{ij}[\theta]$, and an integration by parts in $x_i$, we have
\begin{align}
  &{\int\!\!\!\int}\left( \teps \ueps \cdot \nabla \varphi -\theta u \cdot \nabla \varphi\right) \notag \\
  &\qquad = {\int\!\!\!\int} (\teps -\theta) u \cdot \nabla \varphi - {\int\!\!\!\int} \partial_i\teps \,  T_{ij}[\teps -\theta]\, \partial_j \varphi - {\int\!\!\!\int} \teps\,  T_{ij}[\teps -\theta]\, \partial_i\partial_j \varphi \notag \\
  &\qquad = I_\epsilon + II_\epsilon + III_\epsilon.\label{eq:AUX2}
\end{align}Since $u \in \LL{2}{2}$, by \eqref{eq:A7} and the H\"older inequality it follows that $I_\epsilon \rightarrow 0$ as $\epsilon\rightarrow 0$. To obtain the convergence of $II_\epsilon$ and $III_\epsilon$, we claim that
\begin{align}
  T_{ij}[\teps -\theta] \rightarrow 0 \ \mbox{strongly in}\ L^2(0,T;L_{loc}^2(\RRd)) \label{eq:A8}.
\end{align}The proof of \eqref{eq:A8} is similar to that of \eqref{eq:A7}. Since $T_{ij}$ is bounded on $L^2(\RRd)$ and on $\dot{H}^{1}(\RRd)$, it follows from \eqref{eq:A5} that
\begin{align*}
T_{ij}[\teps]\ \mbox{is bounded in}\ C([0,T];L^2(\RRd))\cap L^2(0,T;\dot{H}^1(\RRd))
\end{align*}
Also, $T_{ij}$ is bounded on $L^{2d/(2d-1)}(\RRd)$, so that we obtain $T_{ij}[\ueps\, \teps]$ is bounded in $L^{4/3}(0,T;L^{2d/(2d-1)}(\RRd))$. Fix a compact ${\mathcal K}$ and a test function $\phi$ supported on ${\mathcal K}$. Applying $T_{ij}$ to \eqref{eq:A1}, integrating against $\phi$, and integrating by parts, we obtain
\begin{align*}
  |\langle \partial_t T_{ij}[ \teps], \phi \rangle| &= |\langle T_{ij}[\ueps\, \teps], \nabla \phi \rangle + \langle \nabla T_{ij}[\teps], \nabla \phi \rangle + \epsilon \langle \Lambda T_{ij}[\teps], \Delta \phi \rangle|\\
  &\leq \Vert T_{ij}[\ueps\, \teps] \Vert_{L_{t}^{4/3}L_{x}^{2d/(2d-1)}} \Vert \phi \Vert_{L_{t}^{4}W_{0}^{1,2d}} + \Vert T_{ij}[\teps] \Vert_{\LH{2}} \Vert \phi \Vert_{L_{t}^{2}W_{0}^{2,2}}\\
  & \leq  C \Vert \ueps\, \teps \Vert_{L_{t}^{4/3}L_{x}^{2d/(2d-1)}} \Vert \phi \Vert_{L_{t}^{4}W_{0}^{2,2d}} + C \Vert\teps \Vert_{\LH{2}} \Vert \phi \Vert_{L_{t}^{4}W_{0}^{2,2d}}.
\end{align*}In the last estimate we have also used the H\"older and Poincar\'e inequalities. The above proves that
\begin{align*}
\partial_t T_{ij}[\teps]\ \mbox{is bounded in}\ L^{4/3}(0,T;W^{-2,\frac{2d}{2d-1}}({\mathcal K})).
\end{align*}The claim \eqref{eq:A8} now follows directly from the Aubin-Lions lemma (cf.~\cite{LionsJL69,Temam01}). Moreover, this shows that in \eqref{eq:A8} we have $III_\epsilon \rightarrow 0$ and $II_\epsilon \rightarrow 0$ as $\epsilon \rightarrow 0$.

This proves that $\theta$ is a weak solution to the limit system, i.e., \eqref{eq:N1}-\eqref{eq:N4}. By construction it satisfies the energy inequality, concluding the proof of existence of the Leray-Hopf weak solutions to \eqref{eq:N1}-\eqref{eq:N4}.

\subsection*{Acknowledgements} We would like to thank Igor Kukavica for fruitful discussions on the De Giorgi method, and Nata\v{s}a Pavlovi\'c for very helpful discussions on earlier versions of this draft. The work of S.F. is supported by the NSF grant DMS 0803268. The work of V.V. was in part supported by the NSF grant DMS 1009769. S.F thanks DAMTP and Trinity College, Cambridge, for their kind hospitality at the time this paper was initiated.


\begin{thebibliography}{99}

\bibitem{AronsonSerrin} D.G.~Aronson and J.~Serrin, {\em Local behavior of solutions of quasilinear parabolic equations}. Arch.~Rational Mech.~Anal.~{\bf 25} (1967), 81--122.

\bibitem{CaffVass} L.~Caffarelli and  A.~Vasseur, {\em Drift diffusion equations with fractional diffusion and the quasi-geostrophic equation}. Annals of Mathematics~{\bf 171} (2010), No.~3, 1903–-1930.

\bibitem{CaffVass2} L.~Caffarelli and  A.~Vasseur, {\em The De Giorgi method for regularity of solutions of elliptic equations and its applications to fluid dynamics}.
Discrete Contin.~Dyn.~Syst.~Ser.~S~{\bf 3} (2010), no.~3, 409--427.

\bibitem{CannonePlanchon} M.~Cannone, F.~Planchon, {\em More Lyapunov functions for the Navier-Stokes equations}, in {\em Navier-Stokes equations: Theory and Numerical Methods}. R. Salvi, ed., Lecture Notes in Pure and Applied Mathematics~{\bf 223}, New York-Oxford (2001), 19-–26.

\chd{\bibitem{CheminLerner} J.-Y.~Chemin and N.~Lerner, {\em Flot de champs de vecteurs non lipschitziens et \'equations de Navier-Stokes}. J.~Differential Equations~{\bf 121} (1995), no.~2, 314--328.}

\bibitem{CorCor04} A.~C\'ordoba, D.~C\'ordoba, {\em A maximum principle applied to quasi-geostrophic equations}. Comm. Math. Phys. {\bf 249} (2004),  no. 3, 511--528.

\bibitem{CorFeff} D.~C\'ordoba and C.~Fefferman, {\em Growth of solutions for QG and 2D Euler equations}. J.~Am.~Math.~Soc.~{\bf 15} (2002), no.~3, 665--670.

\bibitem{ConstIyerWu} P.~Constantin, G.~Iyer, and J.~Wu, {\em Global regularity for a modified critical dissipative quasi-geostrophic equation}.  Indiana Univ.~Math.~J.~{\bf 57} (2008),  no.~6, 2681--2692.

\bibitem{ConstMajTab} P.~Constantin, A.~J.~Majda, E.~Tabak, {\em Formation of strong fronts in the 2-D quasi-geostrophic thermal active scalar}. Nonlinearity {\bf 7} (1994), no.~6, 1495--1533.


\bibitem{ConstWu08} P.~Constantin and J.~Wu, {\em Regularity of H\"older continuous solutions of the supercritical quasi-geostrophic equation}. Ann.~Inst.~H.~Poincar\'e Anal.~Non Lin\'eaire~{\bf~25}  (2008),  no.~6, 1103--1110.

\bibitem{ConstWu09} P.~Constantin and J.~Wu, {\em H\"older continuity of solutions of supercritical dissipative hydrodynamic transport equations}. Ann.~Inst.~H.~Poincar\'e Anal.~Non Lin\'eaire~{\bf 26} (2009),  no.~1, 159--180.

\bibitem{DeGiorgi} E.~De~Giorgi, {\em Sulla differenziabilit\`a e l'analiticit\`a delle estremali degli integrali multipli regolari}. Mem.~Accad.~Sci.~Torino. Cl.~Sci.~Fis.~Mat.~Nat.~{\bf 3} (1957), 3:25–-43.


\bibitem{FV-higher} S.~Friedlander and V.~Vicol, {\em Higher regularity of H\"older continuous solutions of parabolic equations with singular drift velocities}. arXiv:1102.0585v1 [math.AP].

\bibitem{Giaq} M.~Giaquinta, {\em Introduction to regularity theory for nonlinear elliptic systems}. Lectures in Mathematics ETH Zürich. Birkh\"auser Verlag, Basel, 1993.

\bibitem{Glatzmaier} G.A.~Glatzmaier, D.E.~Ogden, and T.L.~Clune, {\em Modeling the Earth's Dynamo} in {\em State of the Planet: Frontiers and Challenges in Geophysics}. Geophysical Monograph~150 (2004), eds R.S.J.~Sparks, C.J.~Hawkesworth, IUGG~{\bf 19}, 13--24.

\bibitem{KisNazVolb} A.~Kiselev, F.~Nazarov, and  A.~Volberg, {\em Global well-posedness for the critical 2D dissipative quasi-geostrophic equation}. Invent.~Math.~{\bf 167} (2007), no.~3, 445--453.

\bibitem{KochTat} H.~Koch and D.~Tataru, {\em Well Posedness for the Navier--Stokes
equations}. Adv. Math.~{\bf 157} (2001), 22–-35.

\bibitem{LadySolonnUralceva} O.A.~Lady\v{z}enskaja, V.A.~Solonnikov, and N.N.~Ural'ceva, {\em
Linear and quasilinear equations of parabolic type}. (Russian) Translated from the Russian by S.~Smith. Translations of Mathematical Monographs, Vol.~{\bf 23} American Mathematical Society, Providence, RI, 1967.

\bibitem{Lieberman} G.M.~Lieberman, {\em Second order parabolic differential equations}. World Scientific Publishing Co., Inc., River Edge, NJ, 1996.

\bibitem{LionsJL69} J.-L.~Lions,{\em Quelque M\'ethodes de R\'esolutions des Probl\'emes aux Limites Non-Lin\'eares}. Dunod, Paris, 1969.

\bibitem{McLean} W.~McLean, {\em Local and global descriptions of periodic pseudodifferential operators}. Math.~Nachr.~{\bf 150} (1991), 151–-161.

\bibitem{Moffatt} H.K.~Moffatt, {\em Magnetostrophic turbulence and the geodynamo}. IUTAM Symposium on Computational Physics and New Perspectives in Turbulence, 339--346, IUTAM Bookser., 4, Springer, Dordrecht, 2008.

\bibitem{Moser} J.~Moser, {\em A Harnack inequality for parabolic differential equations}. Commun.~Pure Appl.~Math.~{\bf 17} (1964), 101--134.

\bibitem{Nash} J.~Nash, {\em Continuity of solutions of parabolic and elliptic equations}. Amer.~J.~Math.~{\bf 80} (1958), 931--954.

\bibitem{Osada} H.~Osada, {\em Diffusion processes with generators of generalized divergence form}. J.~Math.~Kyoto Univ.~{\bf 27} (1987),  no.~4, 597--619.

\bibitem{Ruzhansky} M.~Ruzhansky, V.~Turunen, {\em On the toroidal quantization of periodic pseudo-differential operators}.  Numer.~Funct.~Anal.~Optim.~{\bf 30}  (2009),  no.~9-10, 1098--1124.

\bibitem{Semenov} Y.A.~Semenov, {\em Regularity theorems for parabolic equations}. J.~Funct.~Anal.~{\bf 231} (2006), no.~2, 375--417.

\bibitem{SSSZ} G.~Seregin, L.~Silvestre, V.~\v{S}ver\'ak, and A.~Zlato\v{s}, {\em On divergence-free drifts}. arXiv:1010.6025v1 [math.AP]

\bibitem{Silvestre10a} L.~Silvestre, {\em Eventual regularization for the slightly supercritical quasi-geostrophic equation}. Ann.~Inst.~H.~Poincar\'e Anal.~Non Lin\'eaire~{\bf 27} (2010), no.~2, 693--704.


\bibitem{Stein93} E.M.~Stein, {\em Harmonic analysis: real-variable methods, orthogonality, and oscillatory integrals}. Princeton Mathematical Series~{\bf 43}, Princeton, NJ, Princeton University Press, 1993.

\bibitem{Temam01} R.~Temam, {\em Navier-Stokes equations. Theory and numerical analysis.} Reprint of the 1984 edition. AMS Chelsea Publishing, Providence, RI, 2001.

\bibitem{Vass07} A.~Vasseur, {\em A new proof of partial regularity of solutions to Navier-Stokes equations}. NoDEA Nonlinear Differential Equations Appl.~{\bf 14} (2007), no.~5-6, 753--785.

\bibitem{Wu04} J.~Wu, {\em Global solutions of the 2D dissipative quasi-geostrophic equation in Besov spaces}. SIAM J.~Math.~Anal.~{\bf 36} (2004), no.~3, 1014--1030.

\bibitem{Zhang04} Q.S.~Zhang, {\em A strong regularity result for parabolic equations}. Commun.~Math.~Phys.~{\bf 244} (2004), 245--260.

\bibitem{Zhang06} Q.S.~Zhang, {\em Local Estimates on Two Linear Parabolic Equations with Singular Coefficients}. Pacific Journal of Math.~{\bf 223} (2006), no.~2, 367--396.

\end{thebibliography}
\end{document}